\newtheorem{defi}{Definition}[section]
\newtheorem{example}{\bf WG Example}[section]
\newtheorem{algorithm}{Weak Galerkin Algorithm}
\newcommand{\bq}{\begin{equation}}
\newcommand{\eq}{\end{equation}}
\newcommand{\bx}{{\bf x}}
\newcommand{\bv}{{\bf v}}
\def\bn{{\bf n}}
\def\bq{{\bf q}}
\def\3bar{{|\hspace{-.02in}|\hspace{-.02in}|}}
\title{A Weak Galerkin finite element method for second-order elliptic problems}
\author{Junping Wang\thanks{Division of Mathematical Sciences, National
Science Foundation, Arlington, VA 22230 (jwang@\break nsf.gov). The
research of Wang was supported by the NSF IR/D program, while
working at the Foundation. However, any opinion, finding, and
conclusions or recommendations expressed in this material are those
of the author and do not necessarily reflect the views of the
National Science Foundation.} \and Xiu Ye\thanks{Department of
Mathematics, University of Arkansas at Little Rock, Little Rock, AR
72204 (xxye@ualr.edu). This research was supported in part by
National Science Foundation Grant DMS-0813571}}
\begin{document}
\maketitle

\begin{abstract}
In this paper, authors shall introduce a finite element method by
using a weakly defined gradient operator over discontinuous
functions with heterogeneous properties. The use of weak gradients
and their approximations results in a new concept called {\em
discrete weak gradients} which is expected to play important roles
in numerical methods for partial differential equations. This
article intends to provide a general framework for operating
differential operators on functions with heterogeneous properties.
As a demonstrative example, the discrete weak gradient operator is
employed as a building block to approximate the solution of a model
second order elliptic problem, in which the classical gradient
operator is replaced by the discrete weak gradient. The resulting
numerical approximation is called a weak Galerkin (WG) finite
element solution. It can be seen that the weak Galerkin method
allows the use of totally discontinuous functions in the finite
element procedure. For the second order elliptic problem, an optimal
order error estimate in both a discrete $H^1$ and $L^2$ norms are
established for the corresponding weak Galerkin finite element
solutions. A superconvergence is also observed for the weak Galerkin
approximation.
\end{abstract}

\begin{keywords}
Galerkin finite element methods,  discrete gradient, second-order
elliptic problems, mixed finite element methods
\end{keywords}

\begin{AMS}
Primary, 65N15, 65N30, 76D07; Secondary, 35B45, 35J50
\end{AMS}
\pagestyle{myheadings}

\section{Introduction}

The goal of this paper is to introduce a numerical approximation
technique for partial differential equations based on a new
interpretation of differential operators and their approximations.
To illustrate the main idea, we consider the Dirichlet problem for
second-order elliptic equations which seeks an unknown functions
$u=u(x)$ satisfying
\begin{eqnarray}
-\nabla\cdot (a \nabla u)+\nabla\cdot (b u)+cu &=& f\quad
\mbox{in}\;\Omega,\label{pde}\\
u&=&g\quad \mbox{on}\; \partial\Omega,\label{bc}
\end{eqnarray}
where $\Omega$ is a polygonal or polyhedral domain in
$\mathbb{R}^d\; (d=2,3)$, $a=(a_{ij}(x))_{d\times d}\in
[L^{\infty}(\Omega)]^{d^2}$ is a symmetric matrix-valued function,
$b=(b_i(x))_{d\times 1}$ is a vector-valued function, and $c=c(x)$
is a scalar function on $\Omega$. Assume that the matrix $a$
satisfies the following property: there exists a constant $\alpha
>0$ such that
\begin{equation}\label{matrix}
\alpha\xi^T\xi\leq \xi^T a\xi,\quad\forall \xi\in \mathbb{R}^d.
\end{equation}
For simplicity, we shall concentrate on two-dimensional problems
only (i.e., $d=2$). An extension to higher-dimensional problems is
straightforward.

The standard weak form for (\ref{pde}) and (\ref{bc}) seeks $u\in
H^1(\Omega)$ such that $u=g$ on $\partial\Omega$ and
\begin{eqnarray}\label{weakform}
(a\nabla u, \nabla v)-(b u,\nabla v)+(cu,v)=(f,v)\quad \forall v\in
H_0^1(\Omega),
\end{eqnarray}
where $(\phi,\psi)$ represents the $L^2$-inner product of
$\phi=\phi(x)$ and $\psi=\psi(x)$ -- either vector-valued or
scalar-valued functions. Here $\nabla u$ denotes the gradient of the
function $u=u(x)$, and $\nabla$ is known as the gradient operator.
In the standard Galerkin method (e.g., see \cite{ci, sue}), the
trial space $H^1(\Omega)$ and the test space $H_0^1(\Omega)$ in
(\ref{weakform}) are each replaced by properly defined subspaces of
finite dimensions. The resulting solution in the subspace/subset is
called a Galerkin approximation. A key feature in the Galerkin
method is that the approximating functions are chosen in a way that
the gradient operator $\nabla$ can be successfully applied to them
in the classical sense. A typical implication of this property in
Galerkin finite element methods is that the approximating functions
(both trial and test) are continuous piecewise polynomials over a
prescribed finite element partition for the domain, often denoted by
${\cal T}_h$. Therefore, a great attention has been paid to a
satisfaction of the embedded ``continuity" requirement in the
research of Galerkin finite element methods in existing literature
till recent advances in the development of discontinuous Galerkin
methods. But the interpretation of the gradient operator still lies
in the classical sense for both ``continuous" and ``discontinuous"
Galerkin finite element methods in current existing literature.

In this paper, we will introduce a weak gradient operator defined on
a space of functions with heterogeneous properties. The weak
gradient operator will then be employed to discretize the problem
(\ref{weakform}) through the use of a discrete weak gradient
operator as building bricks. The corresponding finite element method
is called {\em weak Galerkin} method. Details can be found in
Section \ref{section4}.

To explain weak gradients, let $K$ be any polygonal domain with
interior $K^0$ and boundary $\partial K$. A {\em weak function} on
the region $K$ refers to a vector-valued function $v=\{v_0, v_b\}$
such that $v_0\in L^2(K)$ and $v_b\in H^{\frac12}(\partial K)$. The
first component $v_0$ can be understood as the value of $v$ in the
interior of $K$, and the second component $v_b$ is the value of $v$
on the boundary of $K$. Note that $v_b$ may not be necessarily
related to the trace of $v_0$ on $\partial K$ should a trace be
defined. Denote by $W(K)$ the space of weak functions associated
with $K$; i.e.,
\begin{equation}\label{hi.888.new}
W(K) = \{v=\{v_0, v_b \}:\ v_0\in L^2(K),\; v_b\in
H^{\frac12}(\partial K)\}.
\end{equation}

Recall that the dual of $L^2(K)$ can be identified with itself by
using the standard $L^2$ inner product as the action of linear
functionals. With a similar interpretation, for any $v\in W(K)$, the
{\bf weak gradient} of $v$ can be defined as a linear functional
$\nabla_d v$ in the dual space of $H({\rm div},K)$ whose action on
each $q\in H({\rm div},K)$ is given by
\begin{equation}\label{weak-gradient-new}
(\nabla_d v, q) := -\int_K v_0 \nabla\cdot q dK+ \int_{\partial K}
v_b q\cdot\bn ds,
\end{equation}
where $\bn$ is the outward normal direction to $\partial K$. Observe
that for any $v\in W(K)$, the right-hand side of
(\ref{weak-gradient-new}) defines a bounded linear functional on the
normed linear space $H({\rm div}, K)$. Thus, the weak gradient
$\nabla_d v$ is well defined. With the weak gradient operator
$\nabla_d$ being employed in (\ref{weakform}), the trial and test
functions can be allowed to take separate values/definitions on the
interior of each element $T$ and its boundary. Consequently, we are
left with a greater option in applying the Galerkin to partial
differential equations.

Many numerical methods have been developed for the model problem
(\ref{pde})-(\ref{bc}). The existing methods can be classified into
two categories: (1) methods based on the primary variable $u$, and
(2) methods based on the variable $u$ and a flux variable (mixed
formulation). The standard Galerkin finite element methods
(\cite{ci, sue, baker}) and various interior penalty type
discontinuous Galerkin methods (\cite{arnold, abcm, bo, rwg, rwg-1})
are typical examples of the first category. The standard mixed
finite elements (\cite{rt, arnold-brezzi, babuska, brezzi, bf, bdm,
bddf, wang}) and various discontinuous Galerkin methods based on
both variables (\cite{ccps, cs, cgl, jp}) are representatives of the
second category. Due to the enormous amount of publications
available in general finite element methods, it is unrealistic to
list all the key contributions from the computational mathematics
research community in this article. The main intention of the above
citation is to draw a connection between existing numerical methods
with the one that is to be presented in the rest of the Sections.

The weak Galerkin finite element method, as detailed in Section
\ref{section4}, is closely related to the mixed finite element
method (see \cite{rt, arnold-brezzi, babuska, brezzi, bdm, wang})
with a hybridized interpretation of Fraeijs de Veubeke \cite{fdv1,
fdv2}. The hybridized formulation introduces a new term, known as
the Lagrange multiplier, on the boundary of each element. The
Lagrange multiplier is known to approximate the original function
$u=u(x)$ on the boundary of each element. The concept of {\em weak
gradients} shall provide a systematic framework for dealing with
discontinuous functions defined on elements and their boundaries in
a near classical sense. As far as we know, the resulting weak
Galerkin methods and their error estimates are new in many
applications.

\section{Preliminaries and Notations}\label{section2}

We use standard definitions for the Sobolev spaces $H^s(D)$ and
their associated inner products $(\cdot,\cdot)_{s,D}$, norms
$\|\cdot\|_{s,D}$, and seminorms $|\cdot|_{s,D}$ for $s\ge 0$. For
example, for any integer $s\ge 0$, the seminorm $|\cdot|_{s, D}$
is given by
$$
|v|_{s, D} = \left( \sum_{|\alpha|=s} \int_D |\partial^\alpha v|^2 dD
\right)^{\frac12},
$$
with the usual notation
$$
\alpha=(\alpha_1, \alpha_2), \quad |\alpha| = \alpha_1+\alpha_2,\quad
\partial^\alpha =\partial_{x_1}^{\alpha_1} \partial_{x_2}^{\alpha_2}.
$$
The Sobolev norm $\|\cdot\|_{m,D}$ is given by
$$
\|v\|_{m, D} = \left(\sum_{j=0}^m |v|^2_{j,D} \right)^{\frac12}.
$$

The space $H^0(D)$ coincides with $L^2(D)$, for which the norm and
the inner product are denoted by $\|\cdot \|_{D}$ and
$(\cdot,\cdot)_{D}$, respectively. When $D=\Omega$, we shall drop
the subscript $D$ in the norm and inner product notation. The space
$H({\rm div};\Omega)$ is defined as the set of vector-valued
functions on $\Omega$ which, together with their divergence, are
square integrable; i.e.,
\[
H({\rm div}; \Omega)=\left\{ \bv: \ \bv\in [L^2(\Omega)]^2,
\nabla\cdot\bv \in L^2(\Omega)\right\}.
\]
The norm in $H({\rm div}; \Omega)$ is defined by
$$
\|\bv\|_{H({\rm div}; \Omega)} = \left( \|\bv\|^2 + \|\nabla
\cdot\bv\|^2\right)^{\frac12}.
$$

\section{A Weak Gradient Operator and Its Approximation}\label{section3}

The goal of this section is to introduce a weak gradient operator
defined on a space of functions with heterogeneous properties. The
weak gradient operator will then be employed to discretize partial
differential equations. To this end, let $K$ be any polygonal domain
with interior $K^0$ and boundary $\partial K$. A {\em weak function}
on the region $K$ refers to a vector-valued function $v=\{v_0,
v_b\}$ such that $v_0\in L^2(K)$ and $v_b\in H^{\frac12}(\partial
K)$. The first component $v_0$ can be understood as the value of $v$
in the interior of $K$, and the second component $v_b$ is the value
of $v$ on the boundary of $K$. Note that $v_b$ may not be
necessarily related to the trace of $v_0$ on $\partial K$ should a
trace be well defined. Denote by $W(K)$ the space of weak functions
associated with $K$; i.e.,
\begin{equation}\label{hi.888}
W(K) = \{v=\{v_0, v_b \}:\ v_0\in L^2(K),\; v_b\in
H^{\frac12}(\partial K)\}.
\end{equation}

\medskip

\begin{defi}
The dual of $L^2(K)$ can be identified with itself by using the
standard $L^2$ inner product as the action of linear functionals.
With a similar interpretation, for any $v\in W(K)$, the {\bf weak
gradient} of $v$ is defined as a linear functional $\nabla_d v$ in
the dual space of $H({\rm div},K)$ whose action on each $q\in H({\rm
div},K)$ is given by
\begin{equation}\label{weak-gradient}
(\nabla_d v, q) := -\int_K v_0 \nabla\cdot q dK+ \int_{\partial K}
v_b q\cdot\bn ds,
\end{equation}
where $\bn$ is the outward normal direction to $\partial K$.
\end{defi}

\medskip

Note that for any $v\in W(K)$, the right-hand side of
(\ref{weak-gradient}) defines a bounded linear functional on the
normed linear space $H({\rm div}, K)$. Thus, the weak gradient
$\nabla_d v$ is well defined. Moreover, if the components of $v$ are
restrictions of a function $u\in H^1(K)$ on $K^0$ and $\partial K$,
respectively, then we would have
$$
-\int_K v_0 \nabla\cdot q dK+ \int_{\partial K} v_b q\cdot\bn ds =
-\int_K u \nabla\cdot q dK+ \int_{\partial K} u q\cdot\bn ds =
\int_K \nabla u \cdot q dK.
$$
It follows that $\nabla_d v= \nabla u$ is the classical gradient of
$u$.

\bigskip

Next, we introduce a discrete weak gradient operator by defining
$\nabla_d$ in a polynomial subspace of $H({\rm div}, K)$. To this
end, for any non-negative integer $r\ge 0$, denote by $P_{r}(K)$ the
set of polynomials on $K$ with degree no more than $r$. Let
$V(K,r)\subset [P_{r}(K)]^2$ be a subspace of the space of
vector-valued polynomials of degree $r$. A discrete weak gradient
operator, denoted by $\nabla_{d,r}$, is defined so that
$\nabla_{d,r} v \in V(K,r)$ is the unique solution of the following
equation
\begin{equation}\label{discrete-weak-gradient}
\int_K \nabla_{d,r} v\cdot q dK = -\int_K v_0 \nabla\cdot q dK+
\int_{\partial K} v_b q\cdot\bn ds,\qquad \forall q\in V(K,r).
\end{equation}

It is not hard to see that the discrete weak gradient operator
$\nabla_{d,r}$ is a Galerkin-type approximation of the weak gradient
operator $\nabla_d$ by using the polynomial space $V(K,r)$.

The classical gradient operator $\nabla=(\partial_{x_1},
\partial_{x_2})$ should be applied to functions with certain smoothness in
the design of numerical methods for partial differential equations.
For example, in the standard Galerkin finite element method, such a
``smoothness" often refers to continuous piecewise polynomials over
a prescribed finite element partition. With the weak gradient
operator as introduced in this section, derivatives can be taken for
functions without any continuity across the boundary of each
triangle. Thus, the concept of weak gradient allows the use of
functions with heterogeneous properties in approximation.

Analogies of weak gradient can be established for other differential
operators such as divergence and curl operators. Details for weak
divergence and weak curl operators and their applications in
numerical methods will be given in forthcoming papers.

\section{A Weak Galerkin Finite Element Method}\label{section4}
The goal of this section is to demonstrate how discrete weak
gradients be used in the design of numerical schemes that
approximate the solution of partial differential equations. For
simplicity, we take the second order elliptic equation (\ref{pde})
as a model for discussion. With the Dirichlet boundary condition
(\ref{bc}), the standard weak form seeks $u\in H^1(\Omega)$ such
that $u=g$ on $\partial\Omega$ and
\begin{eqnarray}\label{weakform-new}
(a\nabla u, \nabla v)-(b u,\nabla v)+(cu,v)=(f,v)\quad \forall v\in
H_0^1(\Omega).
\end{eqnarray}

Let ${\cal T}_h$ be a triangular partition of the domain $\Omega$
with mesh size $h$. Assume that the partition ${\cal T}_h$ is shape
regular so that the routine inverse inequality in the finite element
analysis holds true (see \cite{ci}). In the general spirit of
Galerkin procedure, we shall design a weak Galerkin method for
(\ref{weakform-new}) by following two basic principles: {\em (1)
replace $H^1(\Omega)$ by a space of discrete weak functions defined
on the finite element partition ${\cal T}_h$ and the boundary of
triangular elements; (2) replace the classical gradient operator by
a discrete weak gradient operator $\nabla_{d,r}$ for weak functions
on each triangle $T$.} Details are to be presented in the rest of
this section.

\bigskip

For each $T\in {\cal T}_h$, Denote by $P_j(T^0)$ the set of
polynomials on $T^0$ with degree no more than $j$, and
$P_\ell(\partial T)$ the set of polynomials on $\partial T$ with
degree no more than $\ell$ (i.e., polynomials of degree $\ell$ on
each line segment of $\partial T$). A {\em discrete weak function}
$v=\{v_0, v_b\}$ on $T$ refers to a weak function $v=\{v_0, v_b\}$
such that $v_0\in P_j(T^0)$ and $v_b\in P_\ell(\partial T)$ with
$j\ge 0$ and $\ell \ge 0$. Denote this space by $W(T, j, \ell)$,
i.e.,
$$
W(T,j,\ell) := \left\{v=\{v_0, v_b\}:\  v_0\in P_j(T^0), v_b\in
P_\ell(\partial T)\right\}.
$$
The corresponding finite element space would be defined by patching
$W(T,j,\ell)$ over all the triangles $T\in {\cal T}_h$. In other
words, the weak finite element space is given by
\begin{equation}\label{weak-fes}
S_h(j,\ell) :=\left\{ v=\{v_0, v_b\}:\ \{v_0, v_b\}|_{T}\in
W(T,j,\ell), \forall T\in {\cal T}_h \right\}.
\end{equation}
Denote by $S_h^0(j,\ell)$ the subspace of $S_h(j,\ell)$ with
vanishing boundary values on $\partial\Omega$; i.e.,
\begin{equation}\label{weak-fes-homo}
S_h^0(j,\ell) :=\left\{ v=\{v_0, v_b\}\in S_h(j,\ell),
{v_b}|_{\partial T\cap \partial\Omega}=0, \ \forall T\in {\cal T}_h
\right\}.
\end{equation}

According to (\ref{discrete-weak-gradient}), for each $v=\{v_0,
v_b\} \in S_h(j,\ell)$, the discrete weak gradient of $v$ on each
element $T$ is given by the following equation:
\begin{equation}\label{discrete-weak-gradient-new}
\int_T \nabla_{d,r} v\cdot q dT = -\int_T v_0 \nabla\cdot q dT+
\int_{\partial T} v_b q\cdot\bn ds,\qquad \forall q\in V(T,r).
\end{equation}
Note that no specific examples of the approximating space $V(T,r)$
have been mentioned, except that $V(T,r)$ is a subspace of the set
of vector-valued polynomials of degree no more than $r$ on $T$.

For any $w,v \in S_h(j,\ell)$, we introduce the following bilinear
form
\begin{equation}\label{linearform-a}
a(w,v)=(a\nabla_{d,r} w,\;\nabla_{d,r} v)-(b u_0,\nabla_{d,r}
v)+(cu_0,v_0),
\end{equation}
where
\begin{eqnarray*}
(a\nabla_{d,r} w,\;\nabla_{d,r} v)&=&\int_\Omega a\nabla_{d,r} w
\cdot\nabla_{d,r}v d\Omega,\\
(bw_0,\;\nabla_{d,r} v)&=&\int_{\Omega} bu_0\cdot \nabla_{d,r}v
d\Omega,\\
(cw_0,v_0)&=&\int_\Omega cw_0 v_0 d\Omega.
\end{eqnarray*}

\begin{algorithm}
A numerical approximation for (\ref{pde}) and (\ref{bc}) can be
obtained by seeking $u_h=\{u_0,u_b\}\in S_h(j,\ell)$ satisfying
$u_b= Q_b g$ on $\partial \Omega$ and the following equation:
\begin{equation}\label{WG-fem}
a(u_h,v)=(f,\;v_0), \quad\forall\ v=\{v_0, v_b\}\in S_h^0(j,\ell),
\end{equation}
where $Q_b g$ is an approximation of the boundary value in the
polynomial space $P_\ell(\partial T\cap \partial\Omega)$. For
simplicity, $Q_b g$ shall be taken as the standard $L^2$ projection
for each boundary segment; other approximations of the boundary
value $u=g$ can also be employed in (\ref{WG-fem}).
\end{algorithm}

\section{Examples of Weak Galerkin Method with Properties}

Although the weak Galerkin scheme (\ref{WG-fem}) is defined for
arbitrary indices $j, \ell$, and $r$, the method can be shown to
produce good numerical approximations for the solution of the
original partial differential equation only with a certain
combination of their values. For one thing, there are at least two
prominent properties that the discrete gradient operator
$\nabla_{d,r}$ should possess in order for the weak Galerkin method
to work well. These two properties are:
\begin{enumerate}
\item[\bf P1:] For any $v\in S_h(j,\ell)$, if $\nabla_{d,r} v=0$ on $T$, then
one must have $v\equiv constant$ on $T$. In other words,
$v_0=v_b=constant$ on $T$;
\item[\bf P2:] Let $u\in H^m(\Omega) (m\ge 1)$ be a smooth function on
$\Omega$, and $Q_h u$ be a certain interpolation/projection of $u$
in the finite element space $S_h(j,\ell)$. Then, the discrete weak
gradient of $Q_h u$ should be a good approximation of $\nabla u$.
\end{enumerate}

\bigskip
The following are two examples of weak finite element spaces that
fit well into the numerical scheme (\ref{WG-fem}).
\medskip
\begin{example}\label{wg-example1}
In this example, we take $\ell=j+1, r=j+1$, and
$V(T,j+1)=\left[P_{j+1}(T)\right]^2$, where $j\ge 0$ is any
non-negative integer. Denote by $S_h(j,j+1)$ the corresponding
finite element space. More precisely, the finite element space
$S_h(j,j+1)$ consists of functions $v=\{v_0, v_b\}$ where $v_0$ is a
polynomial of degree no more than $j$ in $T^0$, and $v_b$ is a
polynomial of degree no more than $j+1$ on $\partial T$. The space
$V(T,r)$ used to define the discrete weak gradient operator
$\nabla_{d,r}$ in (\ref{discrete-weak-gradient-new}) is given as
vector-valued polynomials of degree no more than $j+1$ on $T$.
\end{example}

\medskip
\begin{example}\label{wg-example2}
In the second example, we take $\ell=j, r=j+1$, and
$V(T,r=j+1)=\left[P_{j}(T)\right]^2 + \widehat P_j(T) \bx$, where
$\bx=(x_1,x_2)^T$ is a column vector and $\widehat P_j(T)$ is the
set of homogeneous polynomials of order $j$ in the variable $\bx$.
Denote by $S_h(j,j)$ the corresponding finite element space. Note
that the space $V(T,r)$ that was used to define a discrete weak
gradient is in fact the usual Raviart-Thomas element \cite{rt} of
order $j$ for the vector component.
\end{example}

\bigskip
Let us demonstrate how the two properties {\bf P1} and {\bf P2} are
satisfied with the two examples given as above. For simplicity, we
shall present results only for {\bf WG Example \ref{wg-example1}}.
The following result addresses a satisfaction of the property {\bf
P1}.
\medskip

\begin{lemma}\label{lemma-zero}
For any $v=\{v_0, v_b\}\in W(T, j, j+1)$, let $\nabla_{d,j+1} v$ be
the discrete weak gradient of $v$ on $T$ as defined in
(\ref{discrete-weak-gradient-new}) with
$V(T,r)=\left[P_{j+1}(T)\right]^2$. Then, $\nabla_{d,j+1} v =0$
holds true on $T$ if and only if $v=constant$ (i.e.,
$v_0=v_b=constant$).
\end{lemma}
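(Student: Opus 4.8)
The plan is to prove both directions of the equivalence, with the reverse direction being essentially trivial and the forward direction carrying all the content.

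The easy direction is to assume $v = \text{constant}$, i.e., $v_0 = v_b = c$ for some constant $c$. Then I would substitute into the defining equation (\ref{discrete-weak-gradient-new}). The right-hand side becomes $-\int_T c\,\nabla\cdot q\,dT + \int_{\partial T} c\, q\cdot\bn\, ds$, and by the divergence theorem this is exactly $-\int_T c\,\nabla\cdot q + \int_T \nabla\cdot(cq) = 0$ since $c$ is constant. Hence $\int_T \nabla_{d,j+1}v \cdot q\, dT = 0$ for all $q \in [P_{j+1}(T)]^2$, and because $\nabla_{d,j+1}v$ itself lies in $[P_{j+1}(T)]^2$, taking $q = \nabla_{d,j+1}v$ forces $\nabla_{d,j+1}v = 0$.

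The forward direction is the substantive part. I would assume $\nabla_{d,j+1}v = 0$ on $T$, so that by the definition the right-hand side of (\ref{discrete-weak-gradient-new}) vanishes for every $q \in [P_{j+1}(T)]^2$; that is,
\begin{equation}\label{key-identity}
-\int_T v_0\,\nabla\cdot q\,dT + \int_{\partial T} v_b\, q\cdot\bn\, ds = 0,\qquad \forall q\in [P_{j+1}(T)]^2.
\end{equation}
The strategy is to integrate by parts once more on the first term. Since $v_0 \in P_j(T^0)$ is a genuine polynomial (not merely $L^2$), it is smooth, so $\int_T v_0\,\nabla\cdot q\,dT = -\int_T \nabla v_0\cdot q\,dT + \int_{\partial T} v_0\, q\cdot\bn\, ds$. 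Substituting into (\ref{key-identity}) yields
\begin{equation}\label{rearranged}
\int_T \nabla v_0\cdot q\,dT + \int_{\partial T}(v_b - v_0)\, q\cdot\bn\, ds = 0,\qquad \forall q\in [P_{j+1}(T)]^2.
\end{equation}
Now I would exploit the freedom in choosing $q$ in two stages. First, restrict to $q$ supported away from the boundary in the sense of killing the boundary term: since $\nabla v_0 \in [P_{j-1}(T)]^2 \subset [P_{j+1}(T)]^2$, I can take $q = \nabla v_0$, but the boundary term does not automatically vanish, so instead I would choose $q \in [P_{j+1}(T)]^2$ that vanishes on $\partial T$ (for instance $q = b_T\, p$ where $b_T$ is the cubic bubble and $p$ ranges over suitable polynomials) to conclude that the volume term $\int_T \nabla v_0 \cdot q\,dT$ is controlled, ultimately forcing $\nabla v_0 = 0$, hence $v_0$ is constant. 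Second, with $v_0$ now known to be a constant $c$, equation (\ref{rearranged}) collapses to $\int_{\partial T}(v_b - c)\, q\cdot\bn\, ds = 0$ for all $q \in [P_{j+1}(T)]^2$, and I would argue that the normal traces $q\cdot\bn$ realized by such $q$ span enough of $P_\ell(\partial T)$ to force $v_b = c$ on $\partial T$.

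The main obstacle is the second-stage argument: showing that the normal components $q\cdot\bn|_{\partial T}$, as $q$ ranges over $[P_{j+1}(T)]^2$, are rich enough on each edge to test against $v_b - c \in P_{j+1}(\partial T)$ and conclude $v_b = c$. This is a surjectivity/spanning claim about edge normal traces of degree-$(j+1)$ vector polynomials, and it must be verified edge-by-edge; the delicate point is ensuring the degrees match ($\ell = j+1$ here) so that the test functions exhaust the target space. I would handle the first stage by a standard bubble-function argument to isolate the interior term, which is routine, and reserve the careful counting for the trace-spanning step.
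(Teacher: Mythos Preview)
Your overall strategy is sound, and the paper's proof is close in spirit: it too first shows $v_0$ is constant and then pins down $v_b$ via normal traces. The paper does \emph{not} integrate by parts, however; it works directly with $-\int_T v_0\,\nabla\cdot q\,dT$ and invokes the BDM construction to produce $q\in[P_{j+1}(T)]^2$ with $q\cdot\bn=0$ on $\partial T$ and $\nabla\cdot q=v_0-\bar v_0$, yielding $\int_T(v_0-\bar v_0)^2\,dT=0$ immediately.

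The genuine gap in your sketch is the first-stage bubble argument. You propose choosing $q$ that \emph{vanishes} on $\partial T$, i.e.\ $q=b_T\,p$ with $b_T$ the cubic bubble, so that $p\in[P_{j-2}(T)]^2$. But then $\int_T \nabla v_0\cdot q\,dT=0$ only says that $\nabla v_0\in\nabla P_j(T)$ is orthogonal (in the $b_T$-weighted inner product) to $[P_{j-2}(T)]^2$. Since $\dim\nabla P_j=\binom{j+2}{2}-1$ while $\dim[P_{j-2}]^2=j(j-1)$, the test space is strictly smaller for every $j\ge 1$, and for $j=1$ it is empty. So the bubble test functions cannot force $\nabla v_0=0$; this step is not routine.

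The fix is to relax your requirement from $q|_{\partial T}=0$ to merely $q\cdot\bn|_{\partial T}=0$, which is all you need to kill the boundary term in your identity. The space of $q\in[P_{j+1}(T)]^2$ with zero normal trace is exactly the subspace of BDM$_{j+1}$ whose edge degrees of freedom vanish, and the remaining interior degrees of freedom include the moments $\int_T q\cdot\nabla p\,dT$ for $p\in P_j(T)$; these let you hit $\int_T q\cdot\nabla v_0$ with any value, hence $\nabla v_0=0$. In other words, the ``routine'' interior step already requires the same BDM surjectivity that you correctly flag as the delicate ingredient for the boundary step. Once you accept that input, either your integrated-by-parts route or the paper's direct route goes through; they are essentially the same argument.
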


\begin{proof}
It is trivial to see from (\ref{discrete-weak-gradient-new}) that if
$v=constant$ on $T$, then the right-hand side of
(\ref{discrete-weak-gradient-new}) would be zero for any $q\in
V(T,j+1)$. Thus, we must have $\nabla_{d,j+1} v =0$.

Now assume that $\nabla_{d,j+1} v =0$. It follows from
(\ref{discrete-weak-gradient-new}) that
\begin{equation}\label{discrete-weak-gradient-newv}
-\int_T v_0 \nabla\cdot q dT+ \int_{\partial T} v_b q\cdot\bn
ds=0,\qquad \forall q\in V(T,j+1).
\end{equation}
Let $\bar{v}_0$ be the average of $v_0$ over $T$. Using the results
of \cite{bdm}, there exists a vector-valued polynomial $q_1\in
V(T,j+1)=[P_{j+1}(T)]^2$ such that $q_1\cdot\bn=0$ on $\partial T$
and $\nabla\cdot q_1 = v_0 - \bar{v}_0$. With $q=q_1$ in
(\ref{discrete-weak-gradient-newv}), we arrive at $\int_T
(v_0-\bar{v}_0)^2 dT=0$. It follows that $v_0=\bar{v}_0$, and
(\ref{discrete-weak-gradient-newv}) can be rewritten as
\begin{equation}\label{discrete-weak-gradient-newvw}
\int_{\partial T} (v_b-v_0) q\cdot\bn ds=0,\qquad \forall q\in
V(T,j+1).
\end{equation}
Now since $v_b-v_0\in P_{j+1}(\partial T)$, then one may select a
$q\in V(T,j+1)=[P_{j+1}(T)]^2$ such that
$$
\int_{\partial T} \phi q\cdot\bn ds = \int_{\partial T} \phi
(v_b-v_0)ds,\qquad \forall \phi\in P_{j+1}(\partial T),
$$
which, together with (\ref{discrete-weak-gradient-newvw}) and
$\phi=v_b-v_0$ yields
$$
\int_{\partial T} (v_b-v_0)^2ds = 0.
$$
The last equality implies $v_b=v_0=constant$, which completes a
proof of the lemma.
\end{proof}

\medskip

To verify property {\bf P2}, let $u\in H^1(T)$ be a smooth function
on $T$. Denote by  $Q_h u=\{Q_0 u,\;Q_bu\}$ the $L^2$ projection
onto $P_j(T^0)\times P_{j+1}(\partial T)$. In other words, on each
element $T$, the function $Q_0 u$ is defined as the $L^2$ projection
of $u$ in $P_j(T)$ and on $\partial T$, $Q_b u$ is the $L^2$
projection in $P_{j+1}(\partial T)$. Furthermore, let $R_h$ be the
local $L^2$ projection onto $V(T,j+1)$. According to the definition
of $\nabla_{d,j+1}$, the discrete weak gradient function
$\nabla_{d,j+1}(Q_hu)$ is given by the following equation:
\begin{equation}\label{discrete-weak-gradient-hi}
\int_T \nabla_{d,j+1}(Q_h u) \cdot q dT = -\int_T (Q_0 u)
\nabla\cdot q dT+ \int_{\partial T} (Q_b u) q\cdot\bn ds,\quad
\forall q\in V(K,j+1).
\end{equation}
Since $Q_0$ and $Q_b$ are $L^2$-projection operators, then the
right-hand side of (\ref{discrete-weak-gradient-hi}) is given by
\begin{eqnarray*}
-\int_T (Q_0 u) \nabla\cdot q dT+ \int_{\partial T} (Q_b u)
q\cdot\bn ds &=& -\int_T u \nabla\cdot q dT+ \int_{\partial T} u
q\cdot\bn ds \\
&=& \int_T (\nabla u)\cdot q dT = \int_T (R_h \nabla u)\cdot q dT.
\end{eqnarray*}
Thus, we have derived the following useful identity:
\begin{equation}\label{4.88}
\nabla_{d,j+1}(Q_h u) =R_h (\nabla u),\qquad \forall u\in H^1(T).
\end{equation}
The above identity clearly indicates that $\nabla_{d,j+1}(Q_h u)$ is
an excellent approximation of the classical gradient of $u$ for any
$u\in H^1(T)$. Thus, it is reasonable to believe that the weak
Galerkin finite element method shall provide a good numerical scheme
for the underlying partial differential equations.

\section{Mass Conservation of Weak Galerkin}

The second order elliptic equation (\ref{pde}) can be rewritten in a
conservative form as follows:
$$
\nabla \cdot q + cu = f, \quad q=-a\nabla u + bu.
$$
Let $T$ be any control volume. Integrating the first equation over
$T$ yields the following integral form of mass conservation:
\begin{equation}\label{conservation.01}
\int_{\partial T} q\cdot \bn ds + \int_T cu dT = \int_T f dT.
\end{equation}
We claim that the numerical approximation from the weak Galerkin
finite element method for (\ref{pde}) retains the mass conservation
property (\ref{conservation.01}) with a numerical flux $q_h$. To
this end, for any given $T\in {\cal T}_h$, we chose in
(\ref{WG-fem}) a test function $v=\{v_0, v_b=0\}$ so that $v_0=1$ on
$T$ and $v_0=0$ elsewhere. Using the relation (\ref{linearform-a}),
we arrive at
\begin{equation}\label{mass-conserve.08}
\int_T a\nabla_{d,r} u_h\cdot \nabla_{d,r}v dT - \int_T b u_{0}
\cdot\nabla_{d,r}v dT + \int_T c u_{0} dT = \int_T f dT.
\end{equation}
Using the definition (\ref{discrete-weak-gradient-new}) for
$\nabla_{d,r}$, one has
\begin{eqnarray}
\int_T a\nabla_{d,r} u_h\cdot \nabla_{d,r}v dT &=& \int_T
R_h(a\nabla_{d,r} u_h)\cdot \nabla_{d,r}v dT \nonumber\\
&=& - \int_T \nabla\cdot R_h(a\nabla_{d,r} u_h) dT \nonumber\\
&=& - \int_{\partial T} R_h(a\nabla_{d,r}u_h)\cdot\bn ds
\label{conserv.88}
\end{eqnarray}
and
\begin{eqnarray}
\int_T b u_{0} \cdot\nabla_{d,r}v dT &=& \int_T R_h(b u_{0})
\cdot\nabla_{d,r}v dT\nonumber\\
&=& -\int_T \nabla\cdot R_h(b u_{0})dT\nonumber\\
&=& -\int_{\partial T} R_h(b u_{0})\cdot\bn ds\label{conserve.89}
\end{eqnarray}
Now substituting (\ref{conserve.89}) and (\ref{conserv.88}) into
(\ref{mass-conserve.08}) yields
\begin{equation}\label{mass-conserve.09}
\int_{\partial T} R_h\left(-a\nabla_{d,r}u_h + b u_{0}
\right)\cdot\bn ds +\int_T c u_{0} dT = \int_T f dT,
\end{equation}
which indicates that the weak Galerkin method conserves mass with a
numerical flux given by
$$
q_h\cdot\bn =R_h\left(-a\nabla_{d,r}u_h + b u_{0} \right)\cdot\bn.
$$
The numerical flux $q_h\cdot\bn$ can be verified to be continuous
across the edge of each element $T$ through a selection of the test
function $v=\{v_0,v_b\}$ so that $v_0\equiv 0$ and $v_b$ arbitrary.

\section{Existence and Uniqueness for Weak Galerkin Approximations}

Assume that $u_h$ is a weak Galerkin approximation for the problem
(\ref{pde}) and (\ref{bc}) arising from (\ref{WG-fem}) by using the
finite element space $S_h(j,j+1)$ or $S_h(j,j)$. The goal of this
section is to derive a uniqueness and existence result for $u_h$.
For simplicity, details are only presented for the finite element
space $S_h(j,j+1)$; the result can be extended to $S_h(j,j)$ without
any difficulty.

\medskip
First of all, let us derive the following analogy of G\aa{rding's}
inequality.
\medskip
\begin{lemma}
Let $S_h(j,\ell)$ be the weak finite element space defined in
(\ref{weak-fes}) and $a(\cdot,\cdot)$ be the bilinear form given in
(\ref{linearform-a}). There exists a constant $K$ and $\alpha_1$
satisfying
\begin{equation}\label{garding}
a(v,v)+K(v_0, v_0)\ge \alpha_1(\|\nabla_{d,r}v\|^2+\|v_0\|^2),
\end{equation}
for all $v\in S_{h}(j,\ell)$.
\end{lemma}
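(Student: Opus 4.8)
The plan is to read (\ref{garding}) as a discrete analogue of the classical G\aa{rding} inequality: among the three terms of $a(v,v)=(a\nabla_{d,r}v,\nabla_{d,r}v)-(bv_0,\nabla_{d,r}v)+(cv_0,v_0)$, only the principal diffusion term has a guaranteed sign, so the strategy is to use it as a reservoir of positivity, absorb the indefinite convection term into it via Young's inequality, and then dump the leftover (possibly negative) $\|v_0\|^2$ contributions into the additive penalty $K(v_0,v_0)$.

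First I would apply the uniform ellipticity hypothesis (\ref{matrix}) pointwise to the vector field $\nabla_{d,r}v$, which gives the lower bound $(a\nabla_{d,r}v,\nabla_{d,r}v)\ge \alpha\|\nabla_{d,r}v\|^2$. This is the only source of positive control over the discrete gradient, so every subsequent estimate must be calibrated against it.

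The key step is the convection term $-(bv_0,\nabla_{d,r}v)$. Using the $L^\infty$ bound on $b$ together with Cauchy--Schwarz, one has $|(bv_0,\nabla_{d,r}v)|\le \|b\|_\infty\,\|v_0\|\,\|\nabla_{d,r}v\|$, and then Young's inequality with parameter $\varepsilon$ splits this as $\tfrac{\varepsilon}{2}\|\nabla_{d,r}v\|^2+\tfrac{1}{2\varepsilon}\|b\|_\infty^2\|v_0\|^2$. Choosing $\varepsilon=\alpha$ absorbs exactly half of the diffusion reservoir, leaving $\tfrac{\alpha}{2}\|\nabla_{d,r}v\|^2$ and a penalty of size $\tfrac{1}{2\alpha}\|b\|_\infty^2\|v_0\|^2$. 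The reaction term is bounded below by $(cv_0,v_0)\ge -\|c\|_\infty\|v_0\|^2$. Combining the three estimates yields $a(v,v)\ge \tfrac{\alpha}{2}\|\nabla_{d,r}v\|^2-\big(\tfrac{\|b\|_\infty^2}{2\alpha}+\|c\|_\infty\big)\|v_0\|^2$; adding $K(v_0,v_0)$ with $K=\tfrac{\|b\|_\infty^2}{2\alpha}+\|c\|_\infty+\tfrac{\alpha}{2}$ clears the negative $\|v_0\|^2$ terms and delivers (\ref{garding}) with $\alpha_1=\tfrac{\alpha}{2}$.

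I do not expect any genuine analytic obstacle: the argument is entirely algebraic once ellipticity, the boundedness of $b$ and $c$, and Young's inequality are in hand. The only point requiring care is the choice of the Young parameter, which must be large enough to tame the convection term yet small enough to leave a strictly positive multiple of $\|\nabla_{d,r}v\|^2$ intact; this balancing is precisely what pins down $\alpha_1\le \alpha/2$ and dictates the size of the constant $K$. Note also that no lower-bound hypothesis on $c$ is needed, since any deficiency there is simply folded into $K$.
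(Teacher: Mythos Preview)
Your proof is correct and follows essentially the same approach as the paper: bound the convection and reaction terms via Cauchy--Schwarz and the $L^\infty$ bounds on $b$ and $c$, then use the ellipticity of $a$ to absorb the cross term. The paper's argument is slightly terser---it writes the lower bound $a(v,v)+K(v_0,v_0)\ge \alpha\|\nabla_{d,r}v\|^2-B_1\|\nabla_{d,r}v\|\|v_0\|+(K-B_2)\|v_0\|^2$ and asserts the existence of suitable $K,\alpha_1$ without explicitly invoking Young's inequality or computing the constants---whereas you carry out that final balancing step in detail.
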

\begin{proof}
Let $B_1=\|b\|_{L^\infty(\Omega)}$ and
$B_2=\|c\|_{L^\infty(\Omega)}$ be the $L^\infty$ norm of the
coefficients $b$ and $c$, respectively. Since
\begin{eqnarray*}
|(bv_0, \nabla_{d,r} v)|&\le& B_1\|\nabla_{d,r} v\|\ \|v_0\|,\\
|(cv_0, v_0)|&\leq & B_2 \|v_0\|^2,
\end{eqnarray*}
then it follows from (\ref{linearform-a}) that there exists a
constant $K$ and $\alpha_1$ such that
\begin{eqnarray*}
a(v,v)+K(v_0,v_0)&\ge& \alpha\|\nabla_{d,r}v\|^2-B_1\|\nabla_{d,r}v\|\|v_0\|+(K-B_2)\|v_0\|^2\\
&\ge& \alpha_1(\|\nabla_{d,r}v\|^2+\|v_0\|^2),
\end{eqnarray*}
which completes the proof.
\end{proof}

\medskip

For simplicity of notation, we shall drop the subscript $r$ in the
discrete weak gradient operator $\nabla_{d,r}$ from now on. Readers
should bear in mind that $\nabla_d$ refers to a discrete weak
gradient operator defined by using the setups of either Example
\ref{wg-example1} or Example \ref{wg-example2}. In fact, for these
two examples, one may also define a projection $\Pi_h$ such that
$\Pi_h\bq\in H({\rm div},\Omega)$, and on each $T\in {\cal T}_h$,
one has $\Pi_h\bq \in V(T, r=j+1)$ and the following identity
$$
(\nabla\cdot\bq,\;v_0)_T=(\nabla\cdot\Pi_h\bq,\;v_0)_T, \qquad
\forall v_0\in P_j(T^0).
$$
The following result is based on the above property of $\Pi_h$.

\medskip
\begin{lemma}
For any $\bq\in  H({\rm div},\Omega)$,  we have
\begin{equation}\label{4.200}
\sum_{T\in {\cal T}_h}(-\nabla\cdot\bq, \;v_0)_T=\sum_{T\in {\cal
T}_h}(\Pi_h\bq, \;\nabla_dv)_T,
\end{equation}
for all $v=\{v_0,v_b\}\in S^0_h(j,j+1)$.
\end{lemma}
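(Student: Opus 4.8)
The plan is to prove this identity element-by-element and then sum. On each triangle $T$, I would start from the defining property of the projection $\Pi_h$ and the definition of the discrete weak gradient, and integrate by parts to connect the two sides. The key observation is that for a fixed $v=\{v_0,v_b\}\in S_h^0(j,j+1)$, the discrete weak gradient definition (\ref{discrete-weak-gradient-new}) can be applied with the test function $q=\Pi_h\bq$, which is legitimate since $\Pi_h\bq\in V(T,r=j+1)$ on each $T$. This gives
\begin{equation*}
(\Pi_h\bq,\;\nabla_d v)_T = -\int_T v_0\,\nabla\cdot(\Pi_h\bq)\,dT + \int_{\partial T} v_b\,(\Pi_h\bq)\cdot\bn\,ds.
\end{equation*}

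Next I would rewrite the volume term using the commuting property of $\Pi_h$. By the stated identity $(\nabla\cdot\bq,v_0)_T=(\nabla\cdot\Pi_h\bq,v_0)_T$ for all $v_0\in P_j(T^0)$, and since $v_0\in P_j(T^0)$, the first term on the right equals $-\int_T v_0\,\nabla\cdot\bq\,dT=(-\nabla\cdot\bq,v_0)_T$. This is exactly the summand on the left-hand side of (\ref{4.200}). Thus each element contributes
\begin{equation*}
(\Pi_h\bq,\;\nabla_d v)_T = (-\nabla\cdot\bq,\;v_0)_T + \int_{\partial T} v_b\,(\Pi_h\bq)\cdot\bn\,ds,
\end{equation*}
so the identity reduces to showing that the edge terms sum to zero, i.e. $\sum_{T}\int_{\partial T} v_b\,(\Pi_h\bq)\cdot\bn\,ds=0$.

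The main obstacle, and the real content of the lemma, is showing this boundary sum vanishes. I would argue this by the standard edge-cancellation mechanism for $H(\mathrm{div})$-conforming fluxes. Since $\Pi_h\bq\in H(\mathrm{div},\Omega)$, its normal component $(\Pi_h\bq)\cdot\bn$ is single-valued (continuous) across each interior edge. On an interior edge shared by two triangles, the two contributions carry opposite outward normals but the same $v_b$ (because $v_b$ is a single function on the skeleton, common to both adjacent elements), so they cancel pairwise. On boundary edges lying in $\partial\Omega$, the factor $v_b$ vanishes because $v\in S_h^0(j,j+1)$ enforces $v_b|_{\partial T\cap\partial\Omega}=0$. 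Hence the total boundary sum is zero, and summing the per-element identity over all $T\in{\cal T}_h$ yields (\ref{4.200}). The two facts I would need to state carefully are the continuity of the normal trace of an $H(\mathrm{div})$ function across interior edges and the single-valuedness of $v_b$ on the mesh skeleton; both are built into the function-space definitions in the excerpt.
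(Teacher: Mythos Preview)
Your proposal is correct and follows essentially the same argument as the paper: apply the commuting property of $\Pi_h$ together with the defining relation of $\nabla_d v$ using $q=\Pi_h\bq$, then sum over $T$ and cancel the edge terms using the continuity of $(\Pi_h\bq)\cdot\bn$ across interior edges and $v_b=0$ on $\partial\Omega$. The only difference is cosmetic---you start from the right-hand side while the paper starts from the left---and you are slightly more explicit than the paper about why $v_b$ is single-valued on interior edges.
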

\begin{proof}
The definition of $\Pi_h$ and the definition of $\nabla_d v$ imply ,
\begin{eqnarray*}
\sum_{T\in {\cal T}_h}(-\nabla\cdot\bq, \;v_0)_T&=&\sum_{T\in {\cal T}_h}(-\nabla\cdot \Pi_h\bq, \;v_0)_T\\
&=&\sum_{T\in {\cal T}_h}(\Pi_h\bq, \nabla_d v)_T - \sum_{T\in {\cal
T}_h} \langle v_b,
\Pi_h \bq\cdot \bn\rangle_{\partial T} \\
&=&\sum_{T\in {\cal T}_h}(\Pi_h \bq, \nabla_d v)_T.
\end{eqnarray*}
Here we have used the fact that $\Pi_h \bq\cdot \bn$ is continuous
across each interior edge and $v_b=0$ on $\partial \Omega$. This
completes the proof.
\end{proof}

\medskip

\begin{lemma}\label{approx}
For $u\in H^{1+s}(\Omega)$ with $s>0$, we have
\begin{eqnarray}
\|\Pi_h(a\nabla u)-a\nabla_d(Q_h u)\|&\le& Ch^s\|u\|_{1+s},\label{a2}\\
\|\nabla u-\nabla_d(Q_hu)\|&\le&Ch^s\|u\|_{1+s}.\label{a3}
\end{eqnarray}

\end{lemma}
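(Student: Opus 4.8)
The plan is to reduce both estimates to standard approximation properties of $L^2$ projections and of the mixed-element interpolation $\Pi_h$, exploiting the commuting identity (\ref{4.88}), which in the current notation (with the subscript $r$ dropped) reads $\nabla_d(Q_h u) = R_h(\nabla u)$. This identity is the crucial device, since it converts the discrete weak gradient of a projected function into an ordinary $L^2$ projection of the true gradient, for which classical elementwise estimates are available on the shape-regular partition $\mathcal{T}_h$.

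For (\ref{a3}) I would substitute (\ref{4.88}) directly to get $\nabla u - \nabla_d(Q_h u) = \nabla u - R_h(\nabla u)$. Since $u\in H^{1+s}(\Omega)$ gives $\nabla u\in[H^s(\Omega)]^2$, and $R_h$ is the local $L^2$ projection onto $V(T,j+1)\supseteq[P_0(T)]^2$, the standard polynomial approximation estimate yields $\|\nabla u - R_h\nabla u\|_T \le Ch^s|\nabla u|_{s,T}\le Ch^s|u|_{1+s,T}$ on each element (valid in the relevant range $s\le j+2$). Squaring, summing over $T\in\mathcal{T}_h$, and taking the square root then gives (\ref{a3}).

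For (\ref{a2}), invoking (\ref{4.88}) again gives $a\nabla_d(Q_h u) = aR_h(\nabla u)$, and I would split the error by adding and subtracting $a\nabla u$:
\begin{equation*}
\Pi_h(a\nabla u) - aR_h(\nabla u) = \bigl[\Pi_h(a\nabla u) - a\nabla u\bigr] + a\bigl[\nabla u - R_h(\nabla u)\bigr].
\end{equation*}
The second bracket is bounded by $\|a\|_{L^\infty(\Omega)}$ times the quantity already controlled for (\ref{a3}), hence is $O(h^s\|u\|_{1+s})$. The first bracket is the interpolation error of $\Pi_h$ applied to the flux $a\nabla u$; since $\Pi_h$ coincides with the Raviart--Thomas (respectively BDM) interpolation preserving the divergence moments, the classical mixed-element estimate $\|\bq - \Pi_h\bq\|\le Ch^s\|\bq\|_s$ from \cite{rt, bdm} applies with $\bq = a\nabla u$. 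Combining the two brackets by the triangle inequality produces (\ref{a2}).

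The main obstacle is the first bracket in (\ref{a2}): deriving the $O(h^s)$ interpolation error for $\Pi_h$ requires not only the standard approximation theory for the $H(\mathrm{div})$-conforming interpolant, but also, crucially, enough regularity of the product $a\nabla u$ to guarantee $\|a\nabla u\|_s \le C\|u\|_{1+s}$. This forces an implicit smoothness assumption on the coefficient matrix $a$ (for instance, $a$ piecewise in $W^{s,\infty}$), well beyond the $L^\infty$ boundedness assumed in (\ref{matrix}); one must verify that $a\nabla u$ inherits the $H^s$ regularity of $\nabla u$ elementwise. Everything else reduces to routine scaling arguments and summation over $\mathcal{T}_h$.
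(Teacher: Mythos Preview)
Your proposal is correct and matches the paper's own proof essentially line for line: the paper invokes (\ref{4.88}) to rewrite $\nabla_d(Q_hu)=R_h(\nabla u)$, applies the triangle inequality to split $\Pi_h(a\nabla u)-aR_h(\nabla u)$ into $\Pi_h(a\nabla u)-a\nabla u$ and $a\nabla u-aR_h(\nabla u)$, and then cites the approximation properties of $\Pi_h$ and $R_h$, declaring (\ref{a3}) similar. Your observation about the implicit smoothness requirement on $a$ needed to bound $\|a\nabla u\|_s$ is well taken; the paper does not address it.
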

\begin{proof} Since from (\ref{4.88}) we have $\nabla_d(Q_h u) =
R_h(\nabla u)$, then
$$
\|\Pi_h(a\nabla u)-a\nabla_d(Q_h u)\| = \|\Pi_h(a\nabla
u)-aR_h(\nabla u)\|.
$$
Using the triangle inequality and the definition of $\Pi_h$ and
$R_h$, we have
\begin{eqnarray*}
\|\Pi_h(a\nabla u)-aR_h(\nabla u)\|&\le&\|\Pi_h(a\nabla u)-a\nabla u\|+\|a\nabla u-aR_h(\nabla u)\|\\
&\le& Ch^s\|u\|_{1+s}.
\end{eqnarray*}
The estimate (\ref{a3}) can be derived in a similar way. This
completes a proof of the lemma.
\end{proof}

\bigskip

We are now in a position to establish a solution uniqueness and
existence for the weak Galerkin method (\ref{WG-fem}). It suffices
to prove that the solution is unique. To this end, let $e\in
S_h^0(j,j+1)$ be a discrete weak function satisfying
\begin{equation}\label{uniq}
a(e,v)=0,\qquad\forall v=\{v_0, v_b\}\in S_h^0(j,j+1).
\end{equation}
The goal is to show that $e\equiv 0$ by using a duality approach
similar to what Schatz \cite{schatz} did for the standard Galerkin
finite element methods.
\medskip

\begin{lemma}\label{L2byH1}
Let $e=\{e_0, e_b\}\in S_h^0(j,j+1)$ be a discrete weak function
satisfying (\ref{uniq}). Assume that the dual of (\ref{pde}) with
homogeneous Dirichlet boundary condition has the $H^{1+s}$
regularity ($s\in (0,1]$). Then, there exists a constant $C$ such
that
\begin{equation}\label{dual-1}
\|e_0\|\le Ch^s\|\nabla_d e\|,
\end{equation}
provided that the mesh size $h$ is sufficient small, but a fixed
constant.
\end{lemma}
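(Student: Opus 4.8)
The plan is to run a duality (Aubin--Nitsche / Schatz) argument. First I would introduce the adjoint problem: seek $\Phi$ with $-\nabla\cdot(a\nabla\Phi)-b\cdot\nabla\Phi+c\Phi=e_0$ in $\Omega$ and $\Phi=0$ on $\partial\Omega$, which is exactly the problem obtained by transposing the bilinear form in (\ref{weakform}). The assumed $H^{1+s}$-regularity of the dual problem delivers $\Phi\in H^{1+s}(\Omega)$ together with the a priori bound $\|\Phi\|_{1+s}\le C\|e_0\|$. I would also record that $a\nabla\Phi\in H({\rm div},\Omega)$, since its distributional divergence equals $-e_0-b\cdot\nabla\Phi+c\Phi\in L^2(\Omega)$; this is what makes the identity (\ref{4.200}) applicable with $\bq=a\nabla\Phi$.

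Next I would produce a representation of $\|e_0\|^2$. Testing the adjoint equation against $e_0$ and splitting the resulting $L^2$ inner product over the elements of ${\cal T}_h$, then applying (\ref{4.200}) to the second-order term, gives
$$\|e_0\|^2=(\Pi_h(a\nabla\Phi),\nabla_d e)-(b\cdot\nabla\Phi,e_0)+(c\Phi,e_0).$$
Because $\Phi=0$ on $\partial\Omega$ forces $Q_b\Phi=0$ there, the test function $Q_h\Phi=\{Q_0\Phi,Q_b\Phi\}$ lies in $S_h^0(j,j+1)$, so the orthogonality hypothesis (\ref{uniq}) yields $a(e,Q_h\Phi)=0$. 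Expanding this through the definition (\ref{linearform-a}) and the key identity (\ref{4.88}), namely $\nabla_d(Q_h\Phi)=R_h(\nabla\Phi)$, produces
$$(a\nabla_d e,R_h\nabla\Phi)-(be_0,R_h\nabla\Phi)+(ce_0,Q_0\Phi)=0.$$
Subtracting this identity from the representation of $\|e_0\|^2$ and regrouping, using the symmetry of $a$, recasts $\|e_0\|^2$ as a sum of three mismatches: a gradient term $(\nabla_d e,\Pi_h(a\nabla\Phi)-aR_h\nabla\Phi)$, a convection term $(e_0,b\cdot(R_h\nabla\Phi-\nabla\Phi))$, and a reaction term $(e_0,c(\Phi-Q_0\Phi))$.

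Finally I would estimate each mismatch. Cauchy--Schwarz together with (\ref{a2}) of Lemma \ref{approx} bounds the gradient term by $Ch^s\|\nabla_d e\|\,\|\Phi\|_{1+s}\le Ch^s\|\nabla_d e\|\,\|e_0\|$, after invoking the regularity bound. The convection and reaction terms are controlled by the $L^2$-projection error estimates for $R_h$ and $Q_0$, both of order $h^s$ against $\|\Phi\|_{1+s}$, and therefore contribute at most $Ch^s\|e_0\|^2$. Collecting the three gives $\|e_0\|^2\le Ch^s\|\nabla_d e\|\,\|e_0\|+Ch^s\|e_0\|^2$. I expect the genuine obstacle to lie exactly here: the convection and reaction discrepancies scale like $\|e_0\|^2$ rather than like $\|\nabla_d e\|\,\|e_0\|$, so they cannot simply be divided away. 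The remedy, and the reason for the ``$h$ sufficiently small'' hypothesis, is to take $h$ small enough that $Ch^s\le\tfrac12$, absorb the $Ch^s\|e_0\|^2$ term into the left-hand side, and then divide by $\|e_0\|$ to arrive at (\ref{dual-1}). Beyond this absorption, the work is the careful bookkeeping of which projection estimate dominates each term.
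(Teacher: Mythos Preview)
Your proposal is correct and follows essentially the same duality argument as the paper: introduce the adjoint problem, test against $e_0$, convert the second-order term via (\ref{4.200}), subtract $a(e,Q_h\Phi)=0$ using (\ref{4.88}), and bound the three resulting mismatch terms with Lemma~\ref{approx} and the standard projection estimates before absorbing the $Ch^s\|e_0\|^2$ contribution for small $h$. The paper's write-up differs only cosmetically---it keeps $\nabla_d(Q_h w)$ in place of $R_h\nabla\Phi$ and records the reaction term as $O(h)$ rather than $O(h^s)$---but the structure and the absorption step you flag as the crux are identical.
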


\begin{proof}
Consider the following dual problem: Find $w\in H^1(\Omega)$ such
that
\begin{eqnarray}
-\nabla\cdot (a \nabla w)-b\cdot\nabla w+ cw &=&e_0 \quad
\mbox{in}\;\Omega\label{dual1}\\
w&=&0\quad \mbox{on}\; \partial\Omega,\label{dual1-BC}
\end{eqnarray}
The assumption of $H^{1+s}$ regularity implies that $w\in
H^{1+s}(\Omega)$ and there is a constant $C$ such that
\begin{equation}\label{reg1}
\|w\|_{1+s}\le C\|e_0\|.
\end{equation}
Testing (\ref{dual1}) against $e_0$ and then using (\ref{4.200})
lead to
\begin{eqnarray*}
\|e_0\|^2&=&(-\nabla\cdot (a \nabla w),\;e_0)-(b\cdot\nabla w,\;e_0)+ (cw,\;e_0)\\
&=&(\Pi_h(a\nabla w),\;\nabla_d e)-(\nabla w,\;be_0)+ (cw,\;e_0)\\
&=&(\Pi_h(a\nabla w)-a\nabla_d(Q_h w),\;\nabla_d e)+(a\nabla_d(Q_h w),\;\nabla_d e)\\
& &-(\nabla w-\nabla_d(Q_hw),\;be_0))-(\nabla_d(Q_hw),\;be_0)\\
& &+(cw-c(Q_0w),\;e_0)+(Q_0w,\;ce_0).
\end{eqnarray*}
The sum of the second, forth and sixth term on the right hand side
of the above equation equals $a(e, Q_hw)=0$ due to (\ref{uniq}).
Therefore, it follows from Lemma \ref{approx} that
\begin{eqnarray*}
\|e_0\|^2&=&(\Pi_h(a\nabla w)-a\nabla_d(Q_h w),\;\nabla_d e)-(\nabla w-\nabla_d(Q_hw),\;be_0)\\
& &+(c(w-Q_0w),\;e_0)\\
&\le&Ch^s\|w\|_{1+s}\left(\|\nabla_de\|+\|e_0\|\right) + C h \|w\|_1
\; \|e_0\|.
\end{eqnarray*}
Using the $H^{1+s}$-regularity assumption (\ref{reg1}), we arrive at
$$
\|e_0\|^2 \leq C h^s\|e_0\|\left(\|\nabla_de\|+\|e_0\|\right),
$$
which leads to
$$
\|e_0\| \leq C h^s\left(\|\nabla_de\|+\|e_0\|\right).
$$
Thus, when $h$ is sufficiently small, one would obtain the desired
estimate (\ref{dual-1}). This completes the proof.
\end{proof}

\medskip

\begin{theorem}
Assume that the dual of (\ref{pde}) with homogeneous Dirichlet
boundary condition has $H^{1+s}$-regularity for some $s\in (0,1]$.
The weak Gakerkin finite element method defined in (\ref{WG-fem})
has a unique solution in the finite element spaces $S_h(j,j+1)$ and
$S_j(j,j)$ if the meshsize $h$ is sufficiently small, but a fixed
constant.
\end{theorem}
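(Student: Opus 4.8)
The statement asserts well-posedness of a finite-dimensional linear system, so the plan is to exploit the standard equivalence between existence and uniqueness for a square linear system. The unknown $u_h$ and the test functions both range over spaces of the same finite dimension (after the boundary data $u_b = Q_b g$ is fixed), so $a(\cdot,\cdot)$ induces a square matrix, and invertibility of that matrix is equivalent to triviality of its kernel. Hence it suffices to show that the homogeneous problem (\ref{uniq}), namely $a(e,v)=0$ for all $v=\{v_0,v_b\}\in S_h^0(j,j+1)$, forces $e\equiv 0$; this reduction is exactly the one already set up immediately before Lemma \ref{L2byH1}.

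The core of the argument is to test (\ref{uniq}) against $v=e$ itself, which gives $a(e,e)=0$, and then to play the G\aa rding-type inequality (\ref{garding}) against the dual $L^2$-estimate (\ref{dual-1}). Substituting $a(e,e)=0$ into (\ref{garding}) yields
\begin{equation*}
(K-\alpha_1)\|e_0\|^2 \ge \alpha_1\|\nabla_d e\|^2 .
\end{equation*}
On the other hand, since $e$ satisfies (\ref{uniq}), Lemma \ref{L2byH1} applies and gives $\|e_0\|\le Ch^s\|\nabla_d e\|$, whence $\|e_0\|^2\le C^2h^{2s}\|\nabla_d e\|^2$. Combining the two inequalities produces $\alpha_1\|\nabla_d e\|^2 \le (K-\alpha_1)C^2 h^{2s}\|\nabla_d e\|^2$. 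Choosing $h$ small enough that $(K-\alpha_1)C^2 h^{2s}<\alpha_1$ forces $\|\nabla_d e\|=0$, and then (\ref{dual-1}) immediately gives $\|e_0\|=0$, so that $e_0\equiv 0$ on every $T$.

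It remains to conclude that the boundary component $e_b$ also vanishes. This is where property {\bf P1} enters: since $\nabla_{d,j+1}e=0$ on each element $T$, Lemma \ref{lemma-zero} shows that $e$ is constant on $T$ with $e_0=e_b=\text{const}$, and as $e_0\equiv 0$ that constant must be zero, giving $e_b=0$ on each $\partial T$ and therefore $e\equiv 0$. By the finite-dimensional equivalence noted at the outset, uniqueness of the solution to (\ref{WG-fem}) then also yields its existence, and the identical argument goes through verbatim for $S_h(j,j)$ once the analogue of Lemma \ref{lemma-zero} is invoked. I expect the only genuinely delicate point to be the absorption step: it relies crucially on the smallness of $h$ and on the factor $h^s$ in (\ref{dual-1}), so the hypothesis that $h$ be sufficiently small (yet fixed) is precisely what makes the strict inequality $(K-\alpha_1)C^2h^{2s}<\alpha_1$ available. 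Everything else is bookkeeping built on the two lemmas already proved.
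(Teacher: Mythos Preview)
Your proof is correct and follows essentially the same approach as the paper: reduce to the homogeneous problem~(\ref{uniq}), combine the G\aa rding inequality~(\ref{garding}) with the duality estimate~(\ref{dual-1}) of Lemma~\ref{L2byH1} to force $\|\nabla_d e\|=0$ and $\|e_0\|=0$ for $h$ small, and then invoke Lemma~\ref{lemma-zero} to conclude $e_b=0$. The only cosmetic difference is that you rearrange~(\ref{garding}) as $(K-\alpha_1)\|e_0\|^2\ge \alpha_1\|\nabla_d e\|^2$ before absorbing, whereas the paper keeps the full right-hand side and absorbs directly; both lead to the same smallness condition on $h$.
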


\bigskip

\begin{proof} Observe that uniqueness is equivalent to existence for the solution of
(\ref{WG-fem}) since the number of unknowns is the same as the
number of equations. To prove a uniqueness, let $u^{(1)}_h$ and
$u^{(2)}_h$ be two solutions of (\ref{WG-fem}). By letting
$e=u^{(1)}_h- u^{(2)}_h$ we see that (\ref{uniq}) is satisfied. Now
we have from the G{\aa}rding's inequality (\ref{garding}) that
$$
a(e,e) + K \|e_0\| \ge \alpha_1\left( \|\nabla_d e\| +
\|e_0\|\right).
$$
Thus, it follows from the estimate (\ref{dual-1}) of Lemma
\ref{L2byH1} that
$$
\alpha_1\left( \|\nabla_d e\| + \|e_0\|\right)\leq CK h^s \|\nabla_d
e\|
$$
for $h$ being sufficiently small. Now chose $h$ small enough so that
$CKh^s\leq \frac{\alpha_1}{2}$. Thus,
$$
\|\nabla_d e\| + \|e_0\|\ = 0,
$$
which, together with Lemma \ref{lemma-zero}, implies that $e$ is a
constant and $e_0=0$. This shows that $e=0$ and consequently,
$u^{(1)}_h=u^{(2)}_h$.
\end{proof}

\section{Error Analysis}
The goal of this section is to derive some error estimate for the
weak Galerkin finite element method (\ref{WG-fem}). We shall follow
the usual approach in the error analysis: (1) investigating the
difference between the weak finite element approximation $u_h$ with
a certain interpolation/projection of the exact solution through an
error equation, (2) using a duality argument to analyze the error in
the $L^2$ norm.

Let us begin with the derivation of an error equation for the weak
Galerkin approximation $u_h$ and the $L^2$ projection of the exact
solution $u$ in the weak finite element space $S_h(j,j+1)$. Recall
that the $L^2$ projection is denoted by $Q_h u \equiv \{Q_0u, Q_b
u\}$, where $Q_0$ denotes the local $L^2$ projection onto $P_j(T)$
and $Q_b$ is the local $L^2$ projection onto $P_{j+1}(\partial T)$
on each triangular element $T\in {\cal T}_h$. Let $v=\{v_0,v_b\}\in
S_h^0(j,j+1)$ be any test function. By testing (\ref{pde}) against
the first component $v_0$ and using (\ref{4.200}) we arrive at
\begin{eqnarray*}
(f, v_0) &=& \sum_{T\in {\cal T}_h}(-\nabla\cdot (a\nabla u), \;v_0)_T
+(\nabla\cdot (b u),\; v_0)+(cu, \;v_0)\nonumber\\
&=&(\Pi_h(a\nabla u),\; \nabla_dv)-(\Pi_h(bu),\; \nabla_d v)+(cu,\; v_0).
\end{eqnarray*}
Adding and subtracting the term $a(Q_hu, v)\equiv
(a\nabla_d(Q_hu),\;\nabla_d
v)-(b(Q_0u),\;\nabla_dv)+(c(Q_0u),\;v_0)$ on the right hand side of
the above equation and then using (\ref{4.88}) we obtain
\begin{eqnarray}
(f, v_0)&=&(a\nabla_d(Q_hu),\;\nabla_d v)-(bQ_0u,\;\nabla_dv)+(cQ_0u,\;v_0)\label{true}\\
& &+(\Pi_h(a\nabla u)-aR_h(\nabla u),\;
\nabla_dv)\nonumber\\
& & -(\Pi_h(bu)-bQ_0u,\; \nabla_d v)+(c(u-Q_0u),\; v_0),\nonumber
\end{eqnarray}
which can be rewritten as
\begin{eqnarray}
a(u_h, v)&=&a(Q_hu,v)+(\Pi_h(a\nabla u)-aR_h(\nabla u),\;
\nabla_dv)\nonumber\\
& & -(\Pi_h(bu)-bQ_0u,\; \nabla_d v)+(c(u-Q_0u),\; v_0).\nonumber
\end{eqnarray}
It follows that
\begin{eqnarray}
a(u_h-Q_hu,\;v)&=&(\Pi_h(a\nabla u)-aR_h(\nabla u),\; \nabla_dv)\nonumber\\
& &-(\Pi_h(bu)-bQ_0u,\; \nabla_d v)+(c(u-Q_0u),\; v_0).\label{diff}
\end{eqnarray}
The equation (\ref{diff}) shall be called the {\em error equation}
for the weak Galerkin finite element method (\ref{WG-fem}).

\subsection{An estimate in a discrete $H^1$-norm}

We begin with the following lemma which provides an estimate for the
difference between the weak Galerkin approximation $u_h$ and the
$L^2$ projection of the exact solution of the original problem.

\medskip

\begin{lemma}\label{h1-error}
Let $u\in H^{1}(\Omega)$ be the solution of (\ref{pde}) and
(\ref{bc}). Let $u_h\in S_h(j,j+1)$ be the weak Galerkin
approximation of $u$ arising from (\ref{WG-fem}). Denote by
$e_h:=u_h-Q_h u$ the difference between the weak Galerkin
approximation and the $L^2$ projection of the exaction solution
$u=u(x_1,x_2)$. Then there exists a constant $C$ such that
\begin{eqnarray}
\frac{\alpha_1}{2}(\|\nabla_d(e_h)\|^2+\|e_{h,0}\|^2) &\le&
C\left(\|\Pi_h(a\nabla u)-aR_h(\nabla u)\|^2 +\|c(u-Q_0u)\|^2
\right.\nonumber\\
& & +\left.
\|\Pi_h(bu)-bQ_0u\|^2\right)+K\|u_0-Q_0u\|^2.\label{H1errorestimate}
\end{eqnarray}
\end{lemma}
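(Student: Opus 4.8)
The plan is to exploit the Gårding-type inequality (\ref{garding}) applied to the error function $e_h=u_h-Q_hu$, and then to control the resulting bilinear form $a(e_h,e_h)$ directly through the error equation (\ref{diff}). The essential observation is that $e_h\in S_h^0(j,j+1)$, so it is a legitimate test function, and the right-hand side of (\ref{diff}) can therefore be evaluated at $v=e_h$. This turns the abstract error equation into a concrete bound on $a(e_h,e_h)$ in terms of the three approximation-error quantities appearing on the right of (\ref{H1errorestimate}).

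The key steps, in order, would be as follows. First I would invoke the Gårding inequality (\ref{garding}) with $v=e_h$ to obtain
\begin{equation*}
\alpha_1\left(\|\nabla_d e_h\|^2+\|e_{h,0}\|^2\right)\le a(e_h,e_h)+K(e_{h,0},e_{h,0}).
\end{equation*}
Next I would substitute $v=e_h$ into the error equation (\ref{diff}), giving
\begin{equation*}
a(e_h,e_h)=(\Pi_h(a\nabla u)-aR_h(\nabla u),\;\nabla_d e_h)-(\Pi_h(bu)-bQ_0u,\;\nabla_d e_h)+(c(u-Q_0u),\;e_{h,0}).
\end{equation*}
Each of the three terms on the right is an $L^2$ inner product, so I would apply the Cauchy--Schwarz inequality to each, bounding the first two by $\|\Pi_h(a\nabla u)-aR_h(\nabla u)\|\,\|\nabla_d e_h\|$ and $\|\Pi_h(bu)-bQ_0u\|\,\|\nabla_d e_h\|$ respectively, and the third by $\|c(u-Q_0u)\|\,\|e_{h,0}\|$. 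I would then also bound the extra Gårding term $K(e_{h,0},e_{h,0})$, noting that $e_{h,0}=u_0-Q_0u$ when restricted appropriately, which accounts for the trailing $K\|u_0-Q_0u\|^2$ in the claimed estimate.

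The main obstacle, and the step requiring the most care, is absorbing the factors of $\|\nabla_d e_h\|$ and $\|e_{h,0}\|$ that appear on the right-hand side after Cauchy--Schwarz back into the left-hand side, so as to recover a clean bound with the factor $\tfrac{\alpha_1}{2}$ rather than $\alpha_1$. Here I would use Young's inequality $xy\le \tfrac{\epsilon}{2}x^2+\tfrac{1}{2\epsilon}y^2$ on each cross term, choosing $\epsilon$ small enough (proportional to $\alpha_1$) that the total coefficient of $\|\nabla_d e_h\|^2+\|e_{h,0}\|^2$ produced on the right is at most $\tfrac{\alpha_1}{2}$; moving that portion to the left leaves exactly $\tfrac{\alpha_1}{2}\left(\|\nabla_d e_h\|^2+\|e_{h,0}\|^2\right)$ and converts the remaining squared-norm terms into the quantities displayed in (\ref{H1errorestimate}), with the constant $C$ absorbing the $\tfrac{1}{2\epsilon}$ factors. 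The bookkeeping of these constants is routine but must be done consistently so that the final coefficient matches the stated $\tfrac{\alpha_1}{2}$.
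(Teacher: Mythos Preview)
Your proposal is correct and follows essentially the same approach as the paper: test the error equation (\ref{diff}) with $v=e_h$, apply Cauchy--Schwarz to the three resulting inner products, invoke the G{\aa}rding inequality (\ref{garding}), and then use Young's inequality to absorb the $\|\nabla_d e_h\|$ and $\|e_{h,0}\|$ factors, leaving the coefficient $\tfrac{\alpha_1}{2}$ on the left. The only cosmetic difference is that the paper applies Cauchy--Schwarz first and G{\aa}rding second, whereas you state G{\aa}rding first; the substance is identical.
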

\begin{proof} Substituting $v$ in (\ref{diff}) by $e_h:=u_h-Q_hu$
and using the usual Cauchy-Schwarz inequality we arrive at
\begin{eqnarray*}
a(e_h,\;e_h)&=&(\Pi_h(a\nabla u)-aR_h(\nabla u),\; \nabla_d (u_h-Q_hu))\\
& &-(\Pi_h(bu)-bQ_0u,\; \nabla_d (u_h-Q_hu))+(c(u-Q_0u),\; u_0-Q_0u)\\
&\le&\|\Pi_h(a\nabla u)-aR_h(\nabla u)\|\;\|\nabla_d (u_h-Q_hu)\|\\
& &+\|\Pi_h(bu)-bQ_0u\|\;\|\nabla_d( u_h-Q_hu)\|
+\|c(u-Q_0u)\|\;\|u_0-Q_0u\|.
\end{eqnarray*}
Next, we use the G{\aa}rding's inequality (\ref{garding}) to obtain
\begin{eqnarray*}
\alpha_1(\|\nabla_d(e_h)\|^2+\|e_{h,0}\|^2)&\le&
\|\Pi_h(a\nabla u)-aR_h(\nabla u)\|\;\|\nabla_d (u_h-Q_hu)\|\\
& &+\|\Pi_h(bu)-bQ_0u\|\;\|\nabla_d( u_h-Q_hu)\|\\
& &+\|c(u-Q_0u)\|\;\|u_0-Q_0u\|+K\|u_0-Q_0u\|^2\\
&\le& \frac{\alpha_1}{2} (\|\nabla_d(u_h-Q_hu)\|^2+\|u_0-Q_0u\|^2)\\
& &+C\left(\|\Pi_h(a\nabla u)-aR_h(\nabla u)\|^2 +
\|\Pi_h(bu)-bQ_0u\|^2\right.\\
& & +\left.  \|c(u-Q_0u)\|^2\right)+K\|u_0-Q_0u\|^2,
\end{eqnarray*}
which implies the desired estimate (\ref{H1errorestimate}).
\end{proof}

\subsection{An estimate in $L^2(\Omega)$}

We use the standard duality argument to derive an estimate for the
error $u_h-Q_h u$ in the standard $L^2$ norm over domain $\Omega$.

\bigskip
\begin{lemma}\label{l2-error}
Assume that the dual of the problem (\ref{pde}) and (\ref{bc}) has
the $H^{1+s}$ regularity. Let $u\in H^{1}(\Omega)$ be the solution
(\ref{pde}) and (\ref{bc}), and $u_h$ be a weak Galerkin
approximation of $u$ arising from (\ref{WG-fem}) by using either the
weak finite element space $S_h(j,j+1)$ or $S_h(j,j)$. Let $Q_hu$ be
the $L^2$ projection of $u$ in the corresponding finite element
space (recall that it is locally defined). Then, there exists a
constant $C$ such that
\begin{eqnarray*}
\|Q_0u-u_0\|&&\le Ch^s\left( h\|f-Q_0f\|+\|\nabla u- R_h(\nabla
u)\| + \|a\nabla u - R_h(a\nabla u)\| +\|u-Q_0 u\|\right.\\
&& +\|bu-R_h(bu)\|+\left.
\|cu-Q_0(cu)\|+\|\nabla_d(Q_hu-u_h)\|\right),
\end{eqnarray*}
provided that the meshsize $h$ is sufficiently small.
\end{lemma}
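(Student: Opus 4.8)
The plan is to run an Aubin--Nitsche duality argument, adapting the computation already carried out for Lemma~\ref{L2byH1} but now feeding in the nonzero error equation (\ref{diff}) in place of the orthogonality relation (\ref{uniq}). Write $e_h=u_h-Q_hu\in S_h^0(j,j+1)$, so that $e_{h,0}=u_0-Q_0u$ is exactly the quantity to be estimated. As in (\ref{dual1})--(\ref{dual1-BC}) I would introduce the adjoint problem $-\nabla\cdot(a\nabla w)-b\cdot\nabla w+cw=e_{h,0}$ with $w=0$ on $\partial\Omega$, and invoke the $H^{1+s}$-regularity in the form $\|w\|_{1+s}\le C\|e_{h,0}\|$. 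Note that $Q_hw\in S_h^0(j,j+1)$ because $w$ vanishes on $\partial\Omega$, so $Q_hw$ is an admissible test function in (\ref{diff}), and $a\nabla w\in H({\rm div},\Omega)$ by the dual equation.

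First I would test the adjoint equation against $e_{h,0}$, convert the leading term by (\ref{4.200}) with $\bq=a\nabla w$ into $(\Pi_h(a\nabla w),\nabla_de_h)$, and rewrite the convection term as $(\nabla w,\,be_{h,0})$. Splitting $\Pi_h(a\nabla w)=\big(\Pi_h(a\nabla w)-a\nabla_d(Q_hw)\big)+a\nabla_d(Q_hw)$, $\nabla w=\big(\nabla w-\nabla_d(Q_hw)\big)+\nabla_d(Q_hw)$ and $w=(w-Q_0w)+Q_0w$ exactly as in Lemma~\ref{L2byH1}, the ``diagonal'' remainder $(a\nabla_d(Q_hw),\nabla_de_h)-(\nabla_d(Q_hw),be_{h,0})+(cQ_0w,e_{h,0})$ collapses to $a(e_h,Q_hw)$, which by (\ref{diff}) equals its right-hand side evaluated at $v=Q_hw$. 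Using $\nabla_d(Q_hw)=R_h(\nabla w)$ from (\ref{4.88}), the three ``off-diagonal'' remainders are controlled by Lemma~\ref{approx} and the bound $\|w-Q_0w\|\le Ch\|w\|_1$: each is $\le Ch^s\|e_{h,0}\|(\|\nabla_de_h\|+\|e_{h,0}\|)$, so the $\|e_{h,0}\|^2$ part is absorbed on the left for $h$ small while $h^s\|\nabla_de_h\|=h^s\|\nabla_d(Q_hu-u_h)\|$ survives as one of the asserted terms.

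The heart of the matter is the treatment of $a(e_h,Q_hw)$. Here I would reassemble the exact flux $\bq=-a\nabla u+bu$: the $a$- and $b$-contributions combine as $(\Pi_h(a\nabla u)-\Pi_h(bu),R_h\nabla w)=-(\Pi_h\bq,\nabla_d(Q_hw))$, and by the definition of the discrete weak gradient together with the continuity of $\Pi_h\bq\cdot\bn$ across interior edges and $Q_bw|_{\partial\Omega}=0$ (the same cancellation used to prove (\ref{4.200})), this reduces to $-(\nabla\cdot\Pi_h\bq,\,Q_0w)$. The divergence-commuting property of $\Pi_h$ then gives $(\nabla\cdot\Pi_h\bq,Q_0w)_T=(\nabla\cdot\bq,Q_0w)_T=(f-cu,Q_0w)_T$, which is what brings the source $f$ into play. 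Feeding in the primal weak form $(f,w)=(a\nabla u,\nabla w)-(bu,\nabla w)+(cu,w)$ (legitimate since $w\in H_0^1(\Omega)$), the $O(1)$ flux-energy term $-(\bq,\nabla w)$ produced this way cancels the $O(1)$ part of $(aR_h\nabla u-bQ_0u,R_h\nabla w)$, and only residuals survive: $(\bq-R_h\bq,\,\nabla w-R_h\nabla w)$, $(\nabla u-R_h\nabla u,\,a\nabla w)$, $(u-Q_0u,\,b\nabla w)$, $(f-Q_0f,\,w-Q_0w)$ and $(cu-Q_0(cu),\,w-Q_0w)$, up to lower-order cross terms. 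Each residual is then estimated by pairing a $u$- or $f$-projection error against either $\nabla w-R_h\nabla w$ (worth $Ch^s\|w\|_{1+s}$ by (\ref{a3})) or $w-Q_0w$, or by exploiting the $L^2$-orthogonality of $Q_0$ and $R_h$ to replace $\nabla w$ and $w$ by their own projection errors; this is how the terms $\|\nabla u-R_h(\nabla u)\|$, $\|a\nabla u-R_h(a\nabla u)\|$, $\|bu-R_h(bu)\|$, $\|u-Q_0u\|$, $\|cu-Q_0(cu)\|$ and $h\|f-Q_0f\|$ each pick up a factor $\|e_{h,0}\|$ and the global power of $h$.

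The step I expect to be the main obstacle is precisely this reassembly-and-cancellation: one must recognize that the reconstructed source term $(f-cu,Q_0w)$ and the gradient-pairing term share the same leading $-(\bq,\nabla w)$, engineer their cancellation, and then bound every surviving residual \emph{without assuming any regularity on $u$ beyond $u\in H^1$}, keeping all estimates in terms of the computable projection errors. The careful bookkeeping of the $h$-powers — the uniform factor $h^s$ coming from the dual regularity through $\|\nabla w-R_h\nabla w\|$, and the extra power of $h$ on the source term coming from the super-approximation/orthogonality of $Q_0$ against $w-Q_0w$ — is where the estimate is won or lost. Once all terms are collected in the form $\|e_{h,0}\|^2\le Ch^s\|e_{h,0}\|\,(\text{asserted sum})$ plus an absorbable $Ch^s\|e_{h,0}\|^2$, dividing by $\|e_{h,0}\|$ for $h$ small enough to move the absorbable term to the left (as the hypothesis permits) yields the claimed bound.
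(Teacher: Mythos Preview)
Your plan is sound and will produce the stated estimate, but the paper organizes the duality computation differently and, in particular, \emph{does not} route the argument through the error equation~(\ref{diff}) or through the flux $\bq=-a\nabla u+bu$. After writing $\|Q_0u-u_0\|^2=I+II+III$ as you do, the paper manipulates each of $I$, $II$, $III$ by adding and subtracting so that two ``principal'' terms are peeled off from each; the six principal terms are then recognized to combine exactly into
\[
(a\nabla w,\nabla u)-(a\nabla_d Q_hw,\nabla_d u_h)-(\nabla w,bu)+(\nabla_d Q_hw,bu_0)+(cw,u)-(cQ_0w,u_0)
= (f,w)-(f,Q_0w)=(f-Q_0f,\,w-Q_0w),
\]
using the continuous weak form for $(f,w)$ and the WG scheme for $(f,Q_0w)$. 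This is the whole cancellation in one stroke, leaving eleven explicit residual pairings (equation~(\ref{noname})), each of which is then bounded term-by-term via the regularity estimate and the orthogonality tricks you describe.

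Your route instead isolates $a(e_h,Q_hw)$, invokes~(\ref{diff}), and then has to undo part of that step by reassembling $\Pi_h\bq$ and feeding the PDE back in through $\nabla\cdot\bq=f-cu$; only after this detour do you reach the same cancellation $-(\bq,\nabla w)$ versus the $O(1)$ part of $(aR_h\nabla u-bQ_0u,R_h\nabla w)$. Both approaches land on equivalent residuals and the same orthogonality manipulations (e.g.\ $(\nabla u-R_h\nabla u,a\nabla w)=(\nabla u-R_h\nabla u,a\nabla w-R_h(a\nabla w))$) to extract the uniform $h^s$ factor. The paper's route is shorter because it never unpacks~(\ref{diff}): recognizing the six principal terms as $a(u,w)-a(u_h,Q_hw)$ is the key algebraic insight that bypasses your flux-reassembly step entirely. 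Your approach, on the other hand, reuses the Lemma~\ref{L2byH1} template verbatim and makes the role of the divergence-commuting property of $\Pi_h$ more explicit. (Minor bookkeeping: the sign in your reduction of $-(\Pi_h\bq,\nabla_d(Q_hw))$ is flipped---it equals $+(\nabla\cdot\Pi_h\bq,Q_0w)$---but this does not affect the strategy.)
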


\begin{proof}
Consider the dual problem of (\ref{pde}) and (\ref{bc}) which seeks
$w\in H_0^1(\Omega)$ satisfying
\begin{eqnarray}
-\nabla\cdot (a \nabla w)-b\cdot\nabla w+ cw &=& Q_0u-u_0\quad
\mbox{in}\;\Omega\label{dual}
\end{eqnarray}
The assumed $H^{1+s}$ regularity for the dual problem implies the
existence of a constant $C$ such that
\begin{equation}\label{reg}
\|w\|_{1+s}\le C\|Q_0u-u_0\|.
\end{equation}
Testing (\ref{dual}) against $Q_0u-u_0$ element by element gives
\begin{eqnarray}
\|Q_0u-u_0\|^2&=&(-\nabla\cdot (a \nabla w),\;Q_0u-u_0)-(b\cdot\nabla w,\;Q_0u-u_0)
+ (cw,\;Q_0u-u_0)\nonumber\\
&=&I+II+III,\label{m1}
\end{eqnarray}
where $I, II,$ and $III$ are defined to represent corresponding
terms. Let us estimate each of these terms one by one.

For the term $I$, we use the identity (\ref{4.200}) to obtain
\begin{eqnarray*}
I&=&(-\nabla\cdot (a\nabla w),Q_0u-u_0)= (\Pi_h (a\nabla
w),\nabla_d(Q_hu-u_h)).
\end{eqnarray*}
Recall that $\nabla_d(Q_hu)=R_h (\nabla u)$ with $R_h$ being a local
$L^2$ projection. Thus,
\begin{eqnarray}\nonumber
I&=& (\Pi_h (a\nabla w),\nabla_d(Q_hu-u_h))=(\Pi_h (a\nabla w),R_h \nabla u-\nabla_d u_h)\\
&=& (\Pi_h (a\nabla w),\nabla u-\nabla_d u_h)\nonumber\\
&=& (\Pi_h (a\nabla w)-a\nabla w,\nabla u-\nabla_d u_h) + (a\nabla
w,\nabla u-\nabla_d u_h).\label{yes.888}
\end{eqnarray}
The second term in the above equation above can be handled as
follows. Adding and subtracting two terms $(a\nabla_d Q_h w,
\nabla_d u_h)$ and $(a(\nabla w - R_h \nabla w), \nabla u)$ and
using the fact that $\nabla_d (Q_h u) =R_h (\nabla u)$ and the
definition of $R_h$, we arrive at
\begin{eqnarray}
(a\nabla w,\nabla u-\nabla_d u_h) &=& (a\nabla w, \nabla u)
-(a\nabla w, \nabla_d u_h) \nonumber\\
  &=& (a\nabla w, \nabla u) -(a\nabla_d Q_h w, \nabla_d u_h)- (a(\nabla w - R_h \nabla w), \nabla_d u_h)\nonumber\\
  &=& (a\nabla w, \nabla u)- (a\nabla_d Q_h w, \nabla_d u_h)- (a(\nabla w - R_h \nabla w),
  \nabla_d u_h - \nabla u)\nonumber\\
  & & - (a(\nabla w - R_h \nabla w), \nabla u)\nonumber\\
  &=&  (a\nabla w, \nabla u)- (a\nabla_d Q_h w, \nabla_d u_h) -
  (a(\nabla w - R_h \nabla w), \nabla_d u_h - \nabla u)\label{yes.889}\\
  & & - (\nabla w - R_h \nabla w, a\nabla u -R_h (a\nabla
  u)).\nonumber
\end{eqnarray}
Substituting (\ref{yes.889}) into (\ref{yes.888}) yields
\begin{eqnarray}\label{yes.termI}
I&=&(\Pi_h (a\nabla w)-a\nabla w,\nabla u-\nabla_d u_h)- (a(\nabla w - R_h \nabla w), \nabla_d u_h - \nabla u)\\
& &-(\nabla w - R_h \nabla w, a\nabla u -R_h (a\nabla u))+(a\nabla
w, \nabla u)- (a\nabla_d Q_h w, \nabla_d u_h).\nonumber
\end{eqnarray}

For the term $II$, we add and subtract $(\nabla_d(Q_h
w),\;b(Q_0u-u_0))$ from $II$ to obtain
\begin{eqnarray*}
II&=&-(b\cdot\nabla w,\;Q_0u-u_0)\\
&=&-(\nabla w-\nabla_d(Q_hw),\;b(Q_0u-u_0))-(\nabla_d(Q_h w),\;b(Q_0u-u_0))\\
&=&-(\nabla w-\nabla_d(Q_hw),\;b(Q_0u-u_0))-(\nabla_d(Q_h w),\;bQ_0u)+(\nabla_d(Q_h w),\;bu_0).
\end{eqnarray*}
In the following, we will deal with the second term on the right
hand side of the above equation. To this end, we use (\ref{4.88})
and the definition of $R_h$ and $Q_0$ to obtain
\begin{eqnarray*}
(\nabla_d(Q_h w),\;bQ_0u)&=&(\nabla_d(Q_h w)-\nabla w,\;bQ_0u)+(\nabla w,\;bQ_0u)\\
&=&(\nabla_d(Q_h w)-\nabla w,\;bQ_0u-bu)+(\nabla_d(Q_h w)-\nabla w,\;bu)\\
& &+(\nabla w,\;bQ_0u-bu)+(\nabla w,\;bu)\\
&=&(\nabla_d(Q_h w)-\nabla w,\;bQ_0u-bu)+(R_h(\nabla w)-\nabla w,\;bu-R_h(bu))\\
& &+(b\cdot\nabla w-Q_0(b\cdot\nabla w),\;Q_0u-u)+(\nabla w,\;bu).
\end{eqnarray*}
Combining the last two equations above, we arrive at
\begin{eqnarray}\label{yes.termII}
II&=&-(\nabla w-\nabla_d(Q_hw),\;b(Q_0u-u_0))-(\nabla_d(Q_h w)-\nabla w,\;bQ_0u-bu)\\
& &-(R_h(\nabla w)-\nabla w,\;bu-R_h(bu))-(b\cdot\nabla w-Q_0(b\cdot\nabla w),\;Q_0u-u)\nonumber\\
& &-(\nabla w,\;bu)+(\nabla_d(Q_h w),\;bu_0).\nonumber
\end{eqnarray}

As to the term $III$,  by adding and subtracting some terms and
using the fact that $Q_0$ is a local $L^2$ projection, we easily
obtain the following
\begin{eqnarray*}
III&=& (cw,\;Q_0u-u_0)=(cw-cQ_0w,\;Q_0u-u_0)+(cQ_0w,\;Q_0u-u_0)\\
&=& (cw-cQ_0w,\;Q_0u-u_0)+(cQ_0w,\;Q_0u)-(cQ_0w,\;u_0)\\
&=& (cw-cQ_0w,\;Q_0u-u_0)+(cQ_0w-cw,\;Q_0u)+(cw,\;Q_0u-u)\\
& &+(cw,\;u)-(cQ_0w,\;u_0)\\
&=& (cw-cQ_0w,\;Q_0u-u_0)+(Q_0w-w,\;cQ_0u-cu)+(Q_0w-w,\;cu-Q_0(cu))\\
& & +(cw-Q_0(cw),\;Q_0u-u)+(cw,\;u)-(cQ_0w,\;u_0).\\
\end{eqnarray*}

Note that the sum of the last two terms in $I$ (see
(\ref{yes.termI})), $II$ (see (\ref{yes.termII})), and $III$ (see
the last equation above) gives
\begin{eqnarray*}
(a\nabla w, \nabla u)- &&(a\nabla_d Q_h w, \nabla_d u_h)-(\nabla
w,\;bu)+(\nabla_d(Q_h w),\;bu_0)
+(cw,\;u)-(cQ_0w,\;u_0)\\
&&=a(u,w)-a(u_h, Q_hw)\\
&&=(f,\; w)-(f,\; Q_0w)\\
&&=(f-Q_0f,\;w-Q_0w).
\end{eqnarray*}
Thus, the sum of $I$, $II$, and $III$ can be written as follows:
\begin{eqnarray}
\|Q_0u-u_0\|^2=&& (f-Q_0f,w-Q_0w)+(\Pi_h (a\nabla w)-a\nabla w,\nabla u-\nabla_d u_h)\nonumber\\
&-& (a(\nabla w - R_h \nabla w), \nabla_d u_h - \nabla u)- ((\nabla w - R_h \nabla w), a\nabla u -R_h (a\nabla u))\nonumber\\
&-&(\nabla w-\nabla_d(Q_hw),\;b(Q_0u-u_0))-(\nabla_d(Q_h w)-\nabla w,\;bQ_0u-bu)\nonumber\\
&-&(R_h(\nabla w)-\nabla w,\;bu-R_h(bu))-(b\cdot\nabla w-Q_0(b\cdot\nabla w),\;Q_0u-u)\nonumber\\
&+&(cw-cQ_0w,\;Q_0u-u_0)+(Q_0w-w,\;cQ_0u-cu)\nonumber\\
&+&(Q_0w-w,\;cu-Q_0(cu))+(cw-Q_0(cw),\;Q_0u-u).\label{noname}
\end{eqnarray}
Using the triangle inequality, (\ref{4.88}) and (\ref{reg}), we can bound the second term
on the right hand side in the above equation by
\begin{eqnarray*}
\left|(\Pi_h (a\nabla w)-a\nabla w,\nabla u-\nabla_d
u_h)\right|&\leq&
\left|(\Pi_h (a\nabla w)-a\nabla w,\nabla u-\nabla_dQ_h u)\right|\\
& & +\left|(\Pi_h (a\nabla w)-a\nabla w,\nabla_dQ_h u-\nabla_d u_h)\right|\\
&\le&Ch^{s}\left(\|\nabla u-R_h(\nabla
u)\|+\|\nabla_d(Q_hu-u_h)\|\right)\|Q_0u-u_0\|.
\end{eqnarray*}
The other terms on the right hand side of (\ref{noname}) can be
estimated in a similar fashion, for which we state the results as
follows:
\begin{eqnarray*}
\left| (a(\nabla w - R_h \nabla w), \nabla_d u_h - \nabla u)\right|
&\le&Ch^{s}\left(\|\nabla u-R_h(\nabla
u)\|+\|\nabla_d(Q_hu-u_h)\|\right)\|Q_0u-u_0\|,\\
\left|((\nabla w - R_h \nabla w), a\nabla u -R_h (a\nabla u))\right|
&\le&Ch^{s}\|a\nabla u-R_h(a\nabla
u)\|\ \|Q_0u-u_0\|,\\
\left| (\nabla w-\nabla_d(Q_hw),\;b(Q_0u-u_0)) \right|
&\le&Ch^{s} \|Q_0u-u_0\|^2,\\
\left|(\nabla_d(Q_h w)-\nabla w,\;bQ_0u-bu)\right|
&\le&Ch^{s}\|u-Q_0u\|\ \|Q_0u-u_0\|,\\
\left|(R_h(\nabla w)-\nabla w,\;bu-R_h(bu))\right|
&\le&Ch^{s}\|bu-R_h(bu)\|\ \|Q_0u-u_0\|,\\
\left|(b\cdot\nabla w-Q_0(b\cdot\nabla w),\;Q_0u-u)\right|
&\le&Ch^{s}\|u-Q_0u\|\ \|Q_0u-u_0\|,\\
\left| (cw-cQ_0w,\;Q_0u-u_0) \right|
&\le&Ch\|Q_0u-u_0\|^2,\\
\left|(Q_0w-w,\;cQ_0u-cu)\right|
&\le&Ch\|u-Q_0u\|\ \|Q_0u-u_0\|,\\
\left|(Q_0w-w,\;cu-Q_0(cu))\right|
&\le&Ch\|cu-Q_0(cu)\|\ \|Q_0u-u_0\|,\\
\left|(cw-Q_0(cw),\;Q_0u-u)\right| &\le&Ch\|u-Q_0u\|\ \|Q_0u-u_0\|.
\end{eqnarray*}
Substituting the above estimates into (\ref{noname}) yields
\begin{eqnarray*}
\|Q_0u&&-u_0\|^2\le Ch^s\left( h\|f-Q_0f\|+\|\nabla u- R_h(\nabla
u)\| + \|a\nabla u - R_h(a\nabla u)\| +\|u-Q_0 u\|\right.\\
&& + \|bu-R_h(bu)\|+\left.
\|cu-Q_0(cu)\|+\|\nabla_d(Q_hu-u_h)\|+\|Q_0u-u_0\|\right)\|Q_0u-u_0\|.
\end{eqnarray*}
For sufficiently small meshsize $h$, we have
\begin{eqnarray*}
\|Q_0u-u_0\|&\le & Ch^s\left( h\|f-Q_0f\|+\|\nabla u- R_h(\nabla
u)\| + \|a\nabla u - R_h(a\nabla u)\| +\|u-Q_0 u\|\right.\\
&& +\|bu-R_h(bu)\|+\left.
\|cu-Q_0(cu)\|+\|\nabla_d(Q_hu-u_h)\|\right),
\end{eqnarray*}
which completes the proof.
\end{proof}

\subsection{Error estimates in $H^1$ and $L^2$}
With the results established in Lemma \ref{h1-error} and Lemma
\ref{l2-error}, we are ready to derive an error estimate for the
weak Galerkin approximation $u_h$. To this end, we may substitute
the result of Lemma \ref{l2-error} into the estimate shown in Lemma
\ref{h1-error}. If so, for sufficiently small meshsize $h$, we would
obtain the following estimate:
\begin{eqnarray*}
\|\nabla_d(u_h-Q_hu)\|^2+\|u_0-Q_0u\|^2 &\le& C\left(\|\Pi_h(a\nabla
u)-aR_h(\nabla u)\|^2 +\|c(u-Q_0u)\|^2
\right.\\
&& +\left. \|\Pi_h(bu)-bQ_0u\|^2\right) \\
&& + Ch^{2s}\left(h^2\|f-Q_0f\|^2+\|\nabla u- R_h(\nabla
u)\|^2\right.\\
&& + \left. \|a\nabla u - R_h(a\nabla u)\|^2 +\|u-Q_0 u\|^2\right.\\
&& +\left. \|bu-R_h(bu)\|^2+ \|cu-Q_0(cu)\|^2\right).
\end{eqnarray*}
A further use of the interpolation error estimate leads to the
following error estimate in a discrete $H^1$ norm.

\begin{theorem}\label{H1error-estimate}
In addition to the assumption of Lemma \ref{l2-error}, assume that
the exact solution $u$ is sufficiently smooth such that $u\in
H^{m+1}(\Omega)$ with $0\le m \le j+1$. Then, there exists a
constant $C$ such that
\begin{eqnarray}\label{trueH1error}
\|\nabla_d(u_h-Q_hu)\|+\|u_0-Q_0u\| \le C(h^{m} \|u\|_{m+1}
+h^{1+s}\|f-Q_0f\|).
\end{eqnarray}
\end{theorem}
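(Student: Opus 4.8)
The plan is to take the estimate displayed immediately before the statement---the one obtained by inserting Lemma~\ref{l2-error} into Lemma~\ref{h1-error} and absorbing the $\|\nabla_d(Q_hu-u_h)\|$ contribution for $h$ small---as the starting point, and then to bound every term on its right-hand side by a standard approximation error. Writing $e_h=u_h-Q_hu$, that combined estimate has the form
\[
\|\nabla_d(u_h-Q_hu)\|^2+\|u_0-Q_0u\|^2 \le C\,(\mathrm{A}) + Ch^{2s}(\mathrm{B}),
\]
where group $(\mathrm{A})$ collects $\|\Pi_h(a\nabla u)-aR_h(\nabla u)\|^2$, $\|c(u-Q_0u)\|^2$ and $\|\Pi_h(bu)-bQ_0u\|^2$, while group $(\mathrm{B})$ collects $h^2\|f-Q_0f\|^2$ together with the pure projection errors $\|\nabla u-R_h(\nabla u)\|^2$, $\|a\nabla u-R_h(a\nabla u)\|^2$, $\|u-Q_0u\|^2$, $\|bu-R_h(bu)\|^2$ and $\|cu-Q_0(cu)\|^2$. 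Since no further structure of the scheme is invoked, the remaining work is entirely approximation-theoretic.

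The key inputs are the approximation orders of the three operators. Because $V(T,j+1)$ contains $[P_j(T)]^2$ in both WG examples, the $L^2$ projections $R_h$ and $Q_0$ reproduce vector polynomials through degree $j$, and $\Pi_h$ is the usual Raviart--Thomas/BDM interpolation of order $j$; hence for any $w\in H^t$ with $0\le t\le j+1$ each of $\|w-R_h w\|$, $\|w-Q_0 w\|$ and $\|w-\Pi_h w\|$ is $\le Ch^{t}\|w\|_{t}$ on the partition. Under $u\in H^{m+1}$ with $m\le j+1$ (and with $a,b,c$ smooth enough that $a\nabla u$, $bu$, $cu\in H^m$), this gives $\|\nabla u-R_h(\nabla u)\|$, $\|a\nabla u-R_h(a\nabla u)\|$, $\|bu-R_h(bu)\|$, $\|u-Q_0u\|$ and $\|cu-Q_0(cu)\|$ all bounded by $Ch^m\|u\|_{m+1}$ (the interior projection in fact gains a full power of $h$, but $h^m$ suffices). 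For the mixed terms in $(\mathrm{A})$ I would insert $\pm a\nabla u$ (respectively $\pm bu$) and use the triangle inequality, so that
\[
\|\Pi_h(a\nabla u)-aR_h(\nabla u)\|\le \|\Pi_h(a\nabla u)-a\nabla u\|+\|a\|_{\infty}\,\|\nabla u-R_h(\nabla u)\|\le Ch^m\|u\|_{m+1},
\]
and likewise $\|\Pi_h(bu)-bQ_0u\|\le Ch^m\|u\|_{m+1}$ and $\|c(u-Q_0u)\|\le B_2\,Ch^m\|u\|_{m+1}$.

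Substituting these back, group $(\mathrm{A})$ is $O(h^{2m}\|u\|_{m+1}^2)$, while the extra factor $h^{2s}$ in front of group $(\mathrm{B})$ only helps: every projection error there is again $O(h^{2m}\|u\|_{m+1}^2)$, and the one genuinely new contribution is $h^{2s}\cdot h^2\|f-Q_0f\|^2=h^{2+2s}\|f-Q_0f\|^2$. Collecting,
\[
\|\nabla_d(u_h-Q_hu)\|^2+\|u_0-Q_0u\|^2\le C\bigl(h^{2m}\|u\|_{m+1}^2+h^{2+2s}\|f-Q_0f\|^2\bigr),
\]
and taking square roots (using $\sqrt{p}+\sqrt{q}\le\sqrt{2}\,\sqrt{p+q}$) yields (\ref{trueH1error}). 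I expect no conceptual obstacle here; the only points needing care are the degree bookkeeping---verifying that $m\le j+1$ is exactly what prevents $\Pi_h$, $R_h$ or $Q_0$ from capping the rate below $h^m$---and the implicit regularity of $a,b,c$ needed to place $a\nabla u$, $bu$, $cu$ in $H^m$. If the coefficients were merely $L^\infty$, these mixed terms would instead have to be estimated elementwise using the smoothness of $u$ alone, which is the place I would watch most closely.
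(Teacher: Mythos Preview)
Your proposal is correct and follows exactly the route the paper takes: the paper's proof consists of the displayed estimate obtained by feeding Lemma~\ref{l2-error} into Lemma~\ref{h1-error} (with the $\|\nabla_d(Q_hu-u_h)\|$ term absorbed for small $h$), followed by the one-line remark ``A further use of the interpolation error estimate leads to the following error estimate.'' You have simply made that last sentence explicit, term by term, which is precisely what is needed.
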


\medskip

Now substituting the error estimate (\ref{trueH1error}) into the
estimate of Lemma \ref{l2-error}, and then using the standard
interpolation error estimate we obtain
\begin{eqnarray*}
\|u_h-Q_hu\|&\le& C\left(h^{1+s}\|f-Q_0f\|+h^{m+s} \|u\|_{m+1} +
h^s(h^{m} \|u\|_{m+1} +h^{1+s}\|f-Q_0f\|)\right)\\
&\le&C\left(h^{1+s}\|f-Q_0f\|+h^{m+s} \|u\|_{m+1}\right).
\end{eqnarray*}
The result can then be summarized as follows.

\bigskip
\begin{theorem}\label{trueL2error} Under the assumption of Theorem
\ref{H1error-estimate}, there exists a constant $C$ such that
\begin{eqnarray*}
\|u_h-Q_hu\| \le C\left(h^{1+s}\|f-Q_0f\|+h^{m+s}
\|u\|_{m+1}\right), \quad s\in (0,1],\ m\in (0,j+1],
\end{eqnarray*}
provided that the mesh-size $h$ is sufficiently small.
\end{theorem}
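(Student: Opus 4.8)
The plan is to obtain the $L^2$ estimate by feeding the discrete $H^1$ bound of Theorem~\ref{H1error-estimate} into the duality estimate of Lemma~\ref{l2-error}, and then controlling every remaining term by the standard approximation properties of the $L^2$ projections $Q_0$, $R_h$ and $\Pi_h$. Since $\|u_h-Q_hu\|$ here denotes the interior $L^2$ error $\|u_0-Q_0u\|$, the whole argument reduces to bounding the right-hand side of Lemma~\ref{l2-error}. The crucial observation is that the only genuinely discrete quantity appearing there is $\|\nabla_d(Q_hu-u_h)\|$; every other term is an interpolation error of the smooth data, and the overall factor $h^s$ (stemming from the $H^{1+s}$ regularity of the dual problem) has already been factored out by the lemma. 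Hence it suffices to bound the bracketed sum by $C\big(h^{m}\|u\|_{m+1}+h\|f-Q_0f\|\big)$.

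For the purely approximation-theoretic terms I would invoke the standard $L^2$-projection error estimates on the shape-regular partition ${\cal T}_h$: for $u\in H^{m+1}(\Omega)$ with $0\le m\le j+1$ one has $\|u-Q_0u\|\le Ch^{m}\|u\|_{m+1}$ and $\|\nabla u-R_h(\nabla u)\|\le Ch^{m}\|u\|_{m+1}$, using that $R_h$ reproduces polynomials up to degree $j+1\ge m$ while $Q_0$ reproduces degree $j$. Assuming the coefficients $a,b,c$ are smooth enough that multiplication by them does not lower the Sobolev order, the same reasoning gives $\|a\nabla u-R_h(a\nabla u)\|$, $\|bu-R_h(bu)\|$ and $\|cu-Q_0(cu)\|$ are all $O(h^{m}\|u\|_{m+1})$. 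The remaining term $\|\nabla_d(Q_hu-u_h)\|$ is supplied by Theorem~\ref{H1error-estimate}, namely $\|\nabla_d(Q_hu-u_h)\|\le C\big(h^{m}\|u\|_{m+1}+h^{1+s}\|f-Q_0f\|\big)$.

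Collecting these, the bracketed sum in Lemma~\ref{l2-error} is bounded by $C\big(h^{m}\|u\|_{m+1}+h\|f-Q_0f\|+h^{1+s}\|f-Q_0f\|\big)$. Multiplying through by the outer factor $h^s$ then yields $\|u_0-Q_0u\|\le C\big(h^{m+s}\|u\|_{m+1}+h^{1+s}\|f-Q_0f\|+h^{1+2s}\|f-Q_0f\|\big)$, and since $h\le 1$ and $s>0$ the last term is absorbed into the second, giving exactly the claimed estimate. I expect the main (and essentially only) obstacle to be bookkeeping rather than any new idea: the hard analytic work, the duality argument, was already done in Lemma~\ref{l2-error}, so the task is to verify that every contribution combines to the correct power of $h$. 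In particular one must check that the worst-case interpolation order, attained when $m=j+1$, is still $h^{m}$, and that after multiplication by $h^s$ the higher-order $h^{1+2s}\|f-Q_0f\|$ term is dominated by the target rate $h^{1+s}\|f-Q_0f\|$.
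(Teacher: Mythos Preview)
Your proposal is correct and follows essentially the same route as the paper: substitute the $H^1$ estimate of Theorem~\ref{H1error-estimate} into the bracket on the right-hand side of Lemma~\ref{l2-error}, bound all remaining projection-error terms by $Ch^{m}\|u\|_{m+1}$ via standard approximation properties, and then absorb the redundant $h^{1+2s}\|f-Q_0f\|$ contribution into $h^{1+s}\|f-Q_0f\|$. The paper's own argument is the same two-line computation you describe, so there is nothing to add.
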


\medskip

If the exact solution $u$ of (\ref{pde}) and (\ref{bc}) has the
$H^{j+2}$ regularity, then we have from Theorem \ref{trueL2error}
that
\begin{eqnarray*}
\|u_h-Q_hu\| &\le& C\left(h^{1+s}h^{j}\|f\|_{j}+h^{j+s+1}
\|u\|_{j+2}\right)\\
 &\le& C h^{j+s+1} \left(\|f\|_{j}+\|u\|_{j+2}\right)
\end{eqnarray*}
for some $0<s\leq 1$, where $s$ is a regularity index for the dual
of (\ref{pde}) and (\ref{bc}). In the case that the dual has a full
$H^2$ (i.e., $s=1$) regularity, one would arrive at
\begin{eqnarray}\label{superc}
\|u_h-Q_hu\| \le C h^{j+2} \left(\|f\|_{j}+\|u\|_{j+2}\right).
\end{eqnarray}
Recall that on each triangular element $T^0$, the finite element
functions are of polynomials of order $j\ge 0$. Thus, the error
estimate (\ref{superc}) in fact reveals a superconvergence for the
weak Galerkin finite element approximation arising from
(\ref{WG-fem}).


\begin{thebibliography}{99}

\bibitem{arnold}
{\sc D. N. Arnold}, {\em An interior penalty finite element method
with discontinuous elements}, SIAM J. Numer. Anal., 19(4),
pp.~742-760, 1982.

\bibitem{arnold-brezzi}
{\sc D. N. Arnold and F. Brezzi}, {\em Mixed and nonconforming
finite element methods: implementation, postprocessing and error
estimates}, RAIRO Modél. Math. Anal. Numér., 19(1), pp.~7-32, 1985.

\bibitem{abcm}
{\sc D. Arnold, F. Brezzi, B. Cockburn, and L. D. Marini}, {\em
Unified analysis of discontinuous Galerkin methods for elliptic
problems}, SIAM J. Numer. Anal., 39 (2002),  pp.~1749--1779.

\bibitem{babuska}
{\sc I. Babu\u{s}ka}, {\em The finite element method with Lagrange
multipliers}, Numer. Math., 20 (1973), pp.~179-192.

\bibitem{baker}
{\sc G. A. Baker}, {\em Finite element methods for elliptic
equations using nonconforming elements}, Math. Comp., 31 (1977),
pp.~45-59.

\bibitem{bo}
{\sc C. E. Baumann and J.T. Oden}, {\em A discontinuous $hp$ finite
element method for convection-diffusion problems}, Comput. Methods
Appl. Mech. Engrg., 175 (1999), pp.~311-341.

\bibitem{sue}
{\sc S. Brenner and R. Scott}, {\em The Mathematical Theory of
Finite Element Mathods},  Springer-Verlag, New York, 1994.

\bibitem{brezzi}
{\sc F. Brezzi}, {\em On the existence, uniqueness, and
approximation of saddle point problems arising from Lagrange
multipliers}, RAIRO, 8 (1974), pp.~129-151.

\bibitem{bf}
{\sc F. Brezzi and M. Fortin},
{\em Mixed and Hybrid Finite Elements},
Springer-Verlag, New York, 1991.

\bibitem{bddf}
{\sc F. Brezzi, J. Douglas, Jr., R. Durán and M. Fortin}, {\em
Mixed finite elements for second order elliptic problems in three
variables}, Numer. Math., 51 (1987), pp.~237-250.

\bibitem{bdm}
{\sc F. Brezzi, J. Douglas, Jr., and L.D. Marini}, {\em Two families
of mixed finite elements for second order elliptic problems}, Numer.
Math., 47 (1985), pp.~217-235.


\bibitem{ccps}
{\sc P. Castillo, B. Cockburn, I. Perugia, and D. Schotzau}, {\em An
a priori error analysis of the local discontinuous Galerkin method
for elliptic problems}, SIAM J. Numer. Anal., 38 (2000),
pp.~1676-1706.

\bibitem{ci}
{\sc P.G. Ciarlet},
{\em The Finite Element Method for Elliptic Problems},
North-Holland, New York, 1978.


\bibitem{cs}
{\sc B. Cockburn and C.-W. Shu}, {\em The local discontinuous
Galerkin  method for time-dependent  convection-diffusion systems},
SIAM J. Numer. Anal., 35 (1998), pp.~2440-2463.

\bibitem{cs1}
{\sc B. Cockburn and C.-W. Shu}, {\em Runge-Kutta Discontinuous
Galerkin methods for convection-dominated problems}, Journal of
Scientific Computing, 16 (2001), pp.~173-261.


\bibitem{cgl}
{\sc B. Cockburn, J. Gopalakrishnan, and R. Lazarov}, {\em Unified
hybridization of discontinuous Galerkin, mixed and continuous
Galerkin methods for second- order elliptic problems}, SIAM J.
Numer. Anal. 47 (2009), pp.~1319-1365.

\bibitem{fdv1}
{\sc B. X. Fraeijs de Veubeke}, Displacement and equilibrium models
in the finite element method, In ``Stress Analysis", O. C.
Zienkiewicz and G. Holister (eds.), John Wiley, New York, 1965.

\bibitem{fdv2}
{\sc B. X. Fraeijs de Veubeke}, Stress function approach,
International Congress on the Finite Element Methods in Structural
Mechanics, Bournemouth, 1975.

\bibitem{jp}
{\sc Y. Jeon, and E. Park}, {\em A Hybrid Discontinuous Galerkin
Method for Elliptic Problems}, SIAM J. Numer. Anal. 48 (2010),
pp.~1968-1983.

\bibitem{rt}
{\sc P. Raviart and J. Thomas}, {\em A mixed finite element method
for second order elliptic problems}, Mathematical Aspects of the
Finite Element Method, I. Galligani, E. Magenes, eds., Lectures
Notes in Math. 606, Springer-Verlag, New York, 1977.

\bibitem{rwg}
{\sc  B. Riviere, M. F. Wheeler, and V. Girault}, {\em A priori
error estimates for Þnite element methods based on discontinuous
approximation spaces for elliptic problems}, SIAM J. Numer. Anal.,
39 (2001), pp.~902-931.

\bibitem{rwg-1}
{\sc B. Riviere, M. F. Wheeler, and V. Girault}, {\em Improved
Energy Estimates for Interior Penalty, Constrained and Discontinuous
Galerkin Methods for Elliptic Problems. Part I}, Computational
Geosciences , volume 8 (1999), pp.~337-360.

\bibitem{schatz}
{\sc A. H. Schatz}, {\em An observation concerning Ritz-Galerkin
methods with indefinite bilinear forms}, Math. Comp., 28 (1974),
pp.~959-962.


\bibitem{wang}
{\sc J. Wang}, {\em Mixed finite element methods}, Numerical Methods
in Scientific and Engineering Computing, Eds: W. Cai, Z. Shi, C-W.
Shu, and J. Xu, Academic Press.

\end{thebibliography}
\end{document}